\documentclass[preprint,11pt]{elsarticle}
\usepackage[margin=1in]{geometry}
\graphicspath{{figures/}}

\usepackage{amsmath,amssymb}
\usepackage{amsthm}
\usepackage{graphicx}
\usepackage{algorithm}
\usepackage{algpseudocode}
\usepackage[colorlinks=true,allcolors=blue]{hyperref}

\newtheorem{theorem}{Theorem}
\newtheorem{proposition}{Proposition}

\newtheorem{lemma}{Lemma}

\journal{}

\begin{document}

\begin{frontmatter}

\title{Vector alignment in matrix Lie groups}

\author{Congzhou M. Sha, MD, PhD}
\ead{congzhou.sha@pennmedicine.upenn.edu}
\affiliation{organization={Penn Medicine Doylestown Health},
  addressline={595 W State St},
  city={Doylestown},
  postcode={18901},
  state={PA},
  country={USA}}

\begin{abstract}
Two observers of the same physical system may differ in gauge by a group element acting on their common vector representations, and recovering that element from finite, noisy paired observations is useful in both theory and experiment. The Kabsch and Horn algorithms solve this problem for rotated frames in $\mathbb R^3$ (i.e. $SO(3)$); our earlier Lie algebra method extends it to the Lorentz group $SO(3,1)_+$. Here we report explicit formulae for the Lie algebra method on the classical matrix Lie groups ($GL(n)$, $SL(n)$, $SO(n)$, $U(n)$, $SO(p,q)$, $Sp(n)$, $Spin(n)$, and $SE(n)$) over the real and complex fields. The four steps (pseudoinverse, matrix logarithm, projection onto the Lie algebra, matrix exponential) are exact for noiseless data. Only the projection is group-dependent, and we show it yields the unique least squares-optimal element of the Lie algebra whenever its image lies in $\mathfrak g$ and its residual is orthogonal to $\mathfrak g$. For noisy data the method is optimal only to leading order, so we add a quasi-Newton correction whose accuracy interpolates between the uncorrected method and direct least squares optimization. The projections, their optimality, and the identity underlying the correction are formally proven in Lean 4.31.0 with Mathlib, and numerical experiments are benchmarked in Julia.
\end{abstract}

\begin{keyword}
Gauge theory\sep frame alignment\sep Lie algebra\sep classical groups\sep
indefinite metric\sep formal verification\sep Lean~4
\end{keyword}

\end{frontmatter}

\section{Introduction}
Two observers can describe the same physical system in different reference frames related by a group element $g\in G$. For example two inertial observers in special relativity are related by a Lorentz transformation, and two neighboring sites in lattice gauge theory are related by a local gauge transformation \cite{Peskin:1995ev}. Therefore, standardizing these reference frames is often necessary in theory and experiment. One way for two observers to reach agreement on the specific $g$ which relates their local gauges is by exchanging measurements of the same set of vectors (in the vector representation of $G$) as measured by each observer. The computational task we address is the following: recover $g$ from a finite, noisy list of paired vector observations $\{v_{A,i}\}$, $\{v_{B,i}\}$ with $v_{B,i}\approx g\,v_{A,i}$.

For $G=SO(N)$ with the positive-definite Euclidean inner product, this is the classical Procrustes problem, solved by Kabsch \cite{kabsch} and Horn \cite{horn} as an eigendecomposition. Beyond that setting, three obstructions may appear: the group is noncompact (e.g. $SO(p,q)$, $\mathrm{Spin}(p,q)$), the metric is indefinite, or the inner product has to be the real part of a Hermitian form.

The Lie algebra method we previously proposed \cite{sha2026} avoids these obstructions by passing through the algebra. Briefly, one performs the unconstrained least squares minimization $g_0=YX^+$, calculates its matrix logarithm, projects $g_0$ onto $\mathfrak g$ in the Frobenius inner product, and exponentiates the result to generate a bona fide element of $G$. The only group-dependent step is orthogonal projection, which we provide in closed form for every classical matrix Lie group ($GL$, $SL$, $O/SO$, $U/SU$, $O(p,q)/SO(p,q)$, $Sp$, $\mathrm{Spin}$, $SE$) over $\mathbb R$ and $\mathbb C$. To ensure the correctness of our formulae, we provide formal computer-verified proofs of the relevant theorems in Lean~4 \cite{lean4} (\ref{app:lean}).

For computational purposes, complex groups are embedded into real $2n\times 2n$ matrices via $\varphi(M)=\left[\begin{smallmatrix}\mathrm{Re}\,M&-\mathrm{Im}\,M\\ \mathrm{Im}\,M&\mathrm{Re}\,M\end{smallmatrix}\right]$, which satisfies $\exp\circ\varphi=\varphi\circ\exp$ (\ref{app:phi}). For example, for the circle group $U(1)=\{e^{i\theta}\}$, $\varphi$ sends the algebra element $i\theta$ to $\left[\begin{smallmatrix}0&-\theta\\\theta&0\end{smallmatrix}\right]$, whose exponential $\left[\begin{smallmatrix}\cos\theta&-\sin\theta\\\sin\theta&\cos\theta\end{smallmatrix}\right]$ coincides with $\varphi(e^{i\theta})$, so complex phases become real 2D rotations at both the algebra and group level.

\section{The Lie algebra alignment method}
\label{sec:method}

\subsection{The algorithm}
Collect the paired observations as the columns of $X=[v_{A,1}\cdots v_{A,m}]$ and $Y=[v_{B,1}\cdots v_{B,m}]$, so that $gX\approx Y$. The Moore--Penrose inverse produces the unconstrained least squares $g_0=YX^+$, which is generally not in $G$. We project it back by passing through the algebra:
\begin{align}
\label{eq:method}
g_0=YX^+,\qquad l_0=\log g_0,\qquad l=\mathrm{proj}_{\mathfrak g}(l_0),
\qquad g=\exp l.
\end{align}
$\mathrm{proj}_{\mathfrak g}$ is the orthogonal projection onto $\mathfrak g$ with respect to the Frobenius inner product
$\langle A,B\rangle=\mathrm{Re}\,\mathrm{tr}(A^\dagger B)$. The only group-dependent step is projection. Recovery is exact on noiseless data; the error bound under noise from \cite{sha2026} transfers unchanged to every classical group. The error grows like $\|X^+\|=1/\sigma_{\min}(X)$ for nearly degenerate measurements and like $1+\|\log g_{\mathrm{GT}}\|$ for ground truth far from the identity.

\subsection{Projections onto the classical Lie algebras}
Table~\ref{tab:projections} lists the constraint defining each $\mathfrak g$ and the projection. Some of these projections are enumerated elsewhere \cite{delacruzParas,begovic,abuafBoralevi}, and the Lorentz case $SO(3,1)_+$ was treated in \cite{sha2026}. These formulae are proven in \ref{app:lean}, with formal verification using Lean 4.31.0 \cite{lean4} and Mathlib 4.31.0 \cite{mathlib2020}.

\begin{table}[h]
\centering
\caption{Orthogonal projections onto Lie algebras for classical groups.
$\eta=\mathrm{diag}(\pm 1)$ is the indefinite metric,
$J=\left[\begin{smallmatrix}0&I\\-I&0\end{smallmatrix}\right]$ is the
symplectic form.}
\label{tab:projections}
\begin{tabular}{llc}
\hline
Group & Algebra constraint $\mathfrak g$ & $\mathrm{proj}_{\mathfrak g}(l_0)$ \\
\hline
$GL(n,\mathbb F)$ & all matrices & $l_0$ \\
$SL(n,\mathbb F)$ & $\mathrm{tr}\,X=0$ & $l_0-\frac{\mathrm{tr}\,l_0}{n}I$ \\
$O(n)/SO(n)$ & $X^T=-X$ & $\frac12(l_0-l_0^T)$ \\
$U(n)$ & $X^\dagger=-X$ & $\frac12(l_0-l_0^\dagger)$ \\
$SU(n)$ & $X^\dagger=-X$, $\mathrm{tr}\,X=0$ &
  $\frac12(l_0-l_0^\dagger)-\frac{1}{2n}\mathrm{tr}(l_0-l_0^\dagger)I$ \\
$O(p,q)$ & $\eta X$ antisymmetric & $\frac12(l_0-\eta l_0^T\eta)$ \\
$Sp(2n,\mathbb F)$ & $JX$ symmetric & $\frac12(l_0+J^T l_0^T J^T)$ \\
$\mathrm{Spin}(n)$ & bivector span & $\sum_k\langle B_k,l_0\rangle B_k$ \\
$SE(n)$ & block form & blockwise action \\
\hline
\end{tabular}
\end{table}

\subsection{Methods}
We compare several ways of solving the alignment problem.

\paragraph{Method 1 (direct)} Minimize $f(\boldsymbol\theta)=\sum_i\|v_{B,i}-g(\boldsymbol\theta)v_{A,i}\|^2$ with $g(\boldsymbol\theta)=\exp\!\big(\sum_k\theta_k B_k\big)$ and $\{B_k\}$ an orthonormal basis of $\mathfrak g$, using the BFGS (Broyden--Fletcher--Goldfarb--Shanno) algorithm \cite{fletcher2000} with reverse-mode automatic differentiation for the gradient. Complex groups are embedded via $\varphi$ (\ref{app:phi}). The exponential is implemented as a degree-$12$ polynomial \cite{almohyExp}.

We also use Newton's method with damping, where
each step solves $H\,\Delta\boldsymbol\theta = \nabla f$ for
the update $\boldsymbol\theta \leftarrow \boldsymbol\theta -
\Delta\boldsymbol\theta$, with the Hessian $H$ obtained by automatic differentiation.

\paragraph{Method 2 (Lie algebra)}
Apply the previously proposed Lie algebra method described in Eq.~\eqref{eq:method} \cite{sha2026}.

\paragraph{Method 3 (corrected)}
Refine the Lie algebra estimate \eqref{eq:method} by a quasi-Newton method, as derived in the following section (Algorithm~\ref{alg:correct}).

Both direct optimizers can be started either \emph{cold}, from the identity
($\boldsymbol\theta = 0$), or \emph{warm}, from the Lie algebra method
estimate \eqref{eq:method}, whose algebra coordinates are
$\theta_k = \langle B_k, \mathrm{proj}_{\mathfrak g}(\log g)\rangle$. A warm
start places the initial value near the optimum, slightly reducing the required number of iterations and increasing the numerical stability of Newton's method. The
correction of Method~3 is warm-started by construction, since it refines the
closed-form estimate directly.

\subsection{Correction to the Lie algebra method}
\label{sec:correction}
The Lie algebra method estimate \eqref{eq:method} minimizes the alignment objective
only to leading order: step~3 measures the size of a correction by its plain
Frobenius norm, whereas the objective measures it after multiplying by the input vectors $X$.

\begin{lemma}\label{lem:dataweight}
	Let $g_0=YX^{+}$ be the unconstrained least-squares solution. For every matrix
	$g$,
	\[
	\|gX-Y\|_F^{2}=\|(g-g_0)X\|_F^{2}+\|g_0X-Y\|_F^{2}.
	\]\footnote{Lean: \texttt{alignment\_objective\_split}, from the rectangular Frobenius Pythagoras \texttt{frobenius\_add\_sq\_rect}; the cross term vanishes from the normal equation $(g_0X-Y)X^{\top}=0$ that the pseudoinverse solution satisfies.}
\end{lemma}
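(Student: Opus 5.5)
We will prove the identity by a Pythagorean decomposition in the Frobenius inner product $\langle A,B\rangle=\mathrm{Re}\,\mathrm{tr}(A^\dagger B)$. Write the residual as
\[
gX-Y=(g-g_0)X+(g_0X-Y).
\]
Expanding the squared norm gives
\[
\|gX-Y\|_F^2=\|(g-g_0)X\|_F^2+\|g_0X-Y\|_F^2+2\langle (g-g_0)X,\,g_0X-Y\rangle,
\]
so it suffices to show the cross term vanishes.

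The key step is the normal equation $(g_0X-Y)X^\dagger=0$ for $g_0=YX^+$. This follows from the Moore--Penrose identities. Since $XX^+$ is Hermitian, $(XX^+)^\dagger=XX^+$. Using this and $XX^+X=X$,
\[
g_0X X^\dagger = YX^+XX^\dagger = Y(X^+X)^\dagger X^\dagger = Y(X X^+ X)^\dagger = YX^\dagger .
\]
Here the middle equality uses the Hermiticity of $X^+X$ rather than of $XX^+$, so both identities are needed. Hence $(g_0X-Y)X^\dagger=0$.

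With the normal equation in hand, the cross term is handled by cyclicity of the trace. Writing $D=g-g_0$,
\[
\langle DX,\,R\rangle=\mathrm{Re}\,\mathrm{tr}(X^\dagger D^\dagger R)=\mathrm{Re}\,\mathrm{tr}(D^\dagger R X^\dagger),
\]
where $R=g_0X-Y$. This vanishes because $RX^\dagger=0$. Equivalently, every column-space direction $DX$ lies in the row space spanned by $X$, while $R$ is orthogonal to it. The statement holds for every $g$, not only $g\in G$, since no group structure is used.

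The only point needing care is deriving the normal equation from the Penrose conditions, specifically getting the adjoint placement right in the complex case. Everything else is algebra. In the real (embedded via $\varphi$) setting, $\dagger$ reduces to ${}^\top$, matching the Lean statement, and the argument is otherwise unchanged.
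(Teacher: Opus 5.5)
Your proof is correct and matches the paper's argument step for step: the same split $gX-Y=(g-g_0)X+(g_0X-Y)$, the same trace-cyclicity reduction of the cross term to $(g_0X-Y)X^\dagger$, and the same derivation of the normal equation from $XX^+X=X$ and $(X^+X)^\dagger=X^+X$. One small correction to your commentary: your chain of equalities never uses the Hermiticity of $XX^+$, so only those two Penrose conditions are needed, not ``both'' Hermiticity identities.
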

\begin{proof}
	Set $A=(g-g_0)X$ and $B=g_0X-Y$, so $gX-Y=A+B$, and write
	$\langle U,V\rangle=\mathrm{Re}\,\mathrm{tr}(U^{\dagger}V)$:
	\begin{align*}
		\|gX-Y\|_F^{2}
		&=\|A\|_F^{2}+2\langle A,B\rangle+\|B\|_F^{2},\\
		\langle A,B\rangle
		&=\mathrm{Re}\,\mathrm{tr}\big(X^{\dagger}(g-g_0)^{\dagger}(g_0X-Y)\big)
		=\mathrm{Re}\,\mathrm{tr}\big((g-g_0)^{\dagger}\,(g_0X-Y)X^{\dagger}\big),\\
		(g_0X-Y)X^{\dagger}
		&=(YX^{+}X-Y)X^{\dagger}=-Y\,(I-X^{+}X)X^{\dagger}=0 .
	\end{align*}
	The last line reads left to right: substitute $g_0=YX^{+}$, so
	$g_0X=YX^{+}X$; factor $-Y$ out of $YX^{+}X-Y=-Y(I-X^{+}X)$; and use
	$X^{+}XX^{\dagger}=X^{\dagger}$, so $(I-X^{+}X)X^{\dagger}=0$. From the Moore--Penrose identities
	$XX^{+}X=X$ and $(X^{+}X)^{\dagger}=X^{+}X$,
	\[
	\big(X^{+}XX^{\dagger}\big)^{\dagger}=X\,(X^{+}X)^{\dagger}=XX^{+}X=X
	=\big(X^{\dagger}\big)^{\dagger},
	\]
	and taking adjoints results in $X^{+}XX^{\dagger}=X^{\dagger}$. The cross term
	therefore vanishes and $\|gX-Y\|_F^{2}=\|A\|_F^{2}+\|B\|_F^{2}$.
\end{proof}

Minimizing $\|gX-Y\|_F^{2}$ for $g\in G$ is therefore equivalent to minimizing
$\|(g-g_0)X\|_F$, the distance to $g_0$ measured after multiplying by the
inputs; its constrained minimizer over $G$ therefore generally differs from
the closed-form estimate \eqref{eq:method}, which measures distance in the
unweighted Frobenius norm. We examine a
neighborhood of the currently proposed $g$ to first order in $\delta$: every
nearby group element is written multiplicatively as $\exp(\delta)g$ with
$\delta=\sum_k c_kB_k\in\mathfrak g$ in the orthonormal basis $\{B_k\}$, and to
first order $\exp(\delta)g=(I+\delta)g+O(\|\delta\|^2)$. That is, we decompose
the neighborhood using the tangent space at $g$,
$T_gG=\{\delta g:\delta\in\mathfrak g\}$, with the coordinates $c_k$ as the
components of the tangent vector $\delta g$. The data move accordingly,
$\exp(\delta)gX=(I+\delta)gX+O(\|\delta\|^2)$, and the residual changes as
$Y-\exp(\delta)gX=R-\delta(gX)+O(\|\delta\|^2)$, with $R=Y-gX$. The step that best reduces the residual is
\begin{equation}\label{eq:corrstep}
	\delta=\arg\min_{\delta\in\mathfrak g}\ \|\delta(gX)-R\|_F^{2}
	=\sum_k c_kB_k,\qquad c=D^{+}\mathrm{vec}(R),
\end{equation}
a linear least squares via pseudoinverse of $D=[\mathrm{vec}(B_1gX)\ \cdots\ \mathrm{vec}(B_dgX)]$ (real and imaginary parts
stacked for complex groups, so the fit is in the real Frobenius product). The
\emph{corrected} method (Method~3) applies $g\leftarrow\exp(\delta)g$ and
repeats (Algorithm~\ref{alg:correct}). The use of the pseudoinverse for this purpose has previously been described in \cite{LEVIN20011961}.

\begin{algorithm}[h]
	\caption{Corrected alignment (Method 3)}
	\label{alg:correct}
	\begin{algorithmic}[1]
		\Require paired data $X,Y$; orthonormal basis $\{B_k\}_{k=1}^{d}$ of $\mathfrak g$; tolerance $\tau$; maximum steps $N$
		\State $g \gets \exp\!\big(\mathrm{proj}_{\mathfrak g}(\log(YX^{+}))\big)$ \Comment{closed-form estimate \eqref{eq:method}}
		\For{$i = 1,\dots,N$}
		\State $R \gets Y - gX$ \Comment{residual}
		\State $D \gets \big[\,\mathrm{vec}(B_1 gX)\ \cdots\ \mathrm{vec}(B_d gX)\,\big]$ \Comment{stack $\mathrm{Re},\mathrm{Im}$ if $\mathbb F=\mathbb C$}
		\State $c \gets D^{+}\,\mathrm{vec}(R)$ \Comment{least squares for the algebra coordinates}
		\State $\delta \gets \sum_{k} c_k B_k$
		\If{$\|\delta\|_F \le \tau\,(1+\|g\|_F)$}
		\State \textbf{break}
		\EndIf
		\State $g \gets \exp(\delta)\,g$ \Comment{exact update, stays in $G$}
		\EndFor
		\State \Return $g$
	\end{algorithmic}
\end{algorithm}

\subsection{Sampling}
\label{sec:sampling}
For each group $G$ we generate elements of the identity component by $g=\exp\!\big(s\,\mathrm{proj}_{\mathfrak g}(M)\big)$, where $M$ has independent $\mathcal N(0,1)$ entries (with complex parts for $\mathbb F=\mathbb C$) and $s$ sets the scale. Cross-group experiments use $s=0.3$ and $3n$ generic vectors, where $n$ is the matrix dimension; noise is isotropic $\mathcal N(0,\epsilon^2)$ on every component. For $SE(3)$: $R=\exp(s\,\mathrm{proj}_{\mathfrak{so}(3)}(M))$ with $s=0.6$, $t\sim\mathcal N(0,1)^3$, 15 points with $\mathcal N(0,1)$ entries.

\subsection{Software and implementation}
All four methods were implemented for each classical matrix Lie group in
Julia, using the standard \texttt{LinearAlgebra} library. The Lie algebra
method (Method~2) and the correction (Method~3) use only dense linear
algebra: a pseudoinverse, the matrix logarithm \cite{almohyLog}, and the matrix
exponential \cite{almohyExp}. The direct optimizers (Method~1) obtain their gradients by
reverse-mode automatic differentiation using \texttt{ReverseDiff.jl}
(\url{https://github.com/JuliaDiff/ReverseDiff.jl}) and \texttt{ForwardDiff.jl} (\url{https://github.com/JuliaDiff/ForwardDiff.jl}). The BFGS variant takes
only the gradient, while the Newton variant additionally forms a Hessian. Tests were performed on an M3 MacBook Air.

\section{Results}

\subsection{Accuracy and timing}
\label{sec:results-adnewton}
This study compares the performance of seven configurations: (i) the Lie
algebra method (Method~2), (ii) its quasi-Newton correction (Method~3),
(iii--vi) four variations of Newton's method applied to the objective
$J(\theta)=\|\exp(\textstyle\sum_k\theta_k B_k)\,X-Y\|_F^2$ (Method~1),
varying the Hessian (finite-difference vs.\ exact forward-over-reverse) and
the start (cold vs.\ warm), and (vii) the direct BFGS optimizer (Method~1),
which appears in the figures but is excluded from
Table~\ref{tab:adnewton} since it is uniformly slower than the Newton
variants.

We first benchmark on $SO(4)$ in detail, drawing $1{,}000$ random instances
from $3n=12$ generic vectors with isotropic noise $\epsilon=0.05$ and
reporting the median wall-clock time and median recovery error
$\|G-G_{\mathrm{GT}}\|_F$ (Table~\ref{tab:adnewton}). All four
automatic-differentiation variants and the correction converge to the same
least-squares optimum, with median error $3.545\times10^{-2}$, improving on
the Lie algebra baseline of $4.146\times10^{-2}$; they differ only in
computation time. Taking the Lie algebra method's $14\,\mu\mathrm s$ as the
unit, warm-starting removes one to two Newton iterations and the exact full
Hessian avoids the repeated gradient evaluations of the finite-difference
stencil, yet the fastest automatic-differentiation variant still requires
$35\times$ the baseline. The correction reaches the same optimum at
$8.1\times$, because it never builds or factors an explicit Hessian of the
full nonlinear objective.

The same pattern holds across the classical groups. Drawing $g=\exp(A)$ for
random $A\in\mathfrak g$ and recovering from $3n$ generic vectors, in the
noiseless case all seven methods reach a Frobenius error of $10^{-15}$--$10^{-13}$ on every group and
base field, confirming exactness (up to conditioning) across $GL$, $SL$,
$SO$, $U$, $SU$, $SO(p,q)$, $Sp$ (real and complex), $\mathrm{Spin}$, and
$SE$ (Fig.~\ref{fig:acc} for seven representative groups;
Figs.~\ref{fig:pg-first}--\ref{fig:pg-last} for the full set). Under noise
($\epsilon=0.05$), the Lie algebra method and the correction run one to two orders of magnitude faster than the Newton
variants, which re-exponentiate at every step (Fig.~\ref{fig:time}), while the
correction still matches the Newton optimizers' accuracy. The per-group error
distributions, including the direct BFGS optimizer, are shown in
Figs.~\ref{fig:pg-first}--\ref{fig:pg-last}.

\subsection{Special Euclidean}\label{sec:results-se}
For $SE(n)$ the data are points $p_{B,i}=R\,p_{A,i}+t$ with $R\in SO(n)$.
Embedding the points as homogeneous coordinates $\tilde p=(p;1)$ results in
$\tilde p_{B,i}=M\tilde p_{A,i}$ with
$M=\left[\begin{smallmatrix}R&t\\0&1\end{smallmatrix}\right]$, so
\eqref{eq:method} recovers rotation and translation simultaneously, without needing to record and translate by centroids (i.e. $x'=R(x-\bar x)+\bar y$). For $SE(3)$, using $1{,}000$ random rigid motions with $R\in SO(3)$, $t\sim\mathcal N(0,1)^3$, 15 points, and isotropic noise of standard deviation $\epsilon$, we compare two routes, each refined by the correction of \S\ref{sec:correction}: the homogeneous method on the full group, which recovers rotation and translation jointly, and the $SO(3)$ method with centroid translation, which decouples them. In the noiseless case both recover $\left[\begin{smallmatrix}R&t\\0&1\end{smallmatrix}\right]$ to $\sim10^{-15}$,
and under noise their error distributions match (Fig.~\ref{fig:se}), so the
joint homogeneous treatment is as accurate as the classical decoupling.

\begin{table}
\centering
\begin{tabular}{lrrr}
\hline
method & median time & vs.\ Lie & recovery error\\
\hline
Lie algebra (baseline) & $14\,\mu\mathrm s$ & $1.0\times$ & $4.146\times10^{-2}$\\
correction (\S\ref{sec:correction}) & $110\,\mu\mathrm s$ & $8.1\times$ & $3.545\times10^{-2}$\\
AD-Newton, full Hessian, warm & $472\,\mu\mathrm s$ & $35\times$ & $3.545\times10^{-2}$\\
AD-Newton, full Hessian, cold & $596\,\mu\mathrm s$ & $44\times$ & $3.545\times10^{-2}$\\
AD-Newton, FD Hessian, warm & $727\,\mu\mathrm s$ & $54\times$ & $3.545\times10^{-2}$\\
AD-Newton, FD Hessian, cold & $922\,\mu\mathrm s$ & $68\times$ & $3.545\times10^{-2}$\\
\hline
\end{tabular}
\caption{Median time and recovery error over $1{,}000$ random $SO(4)$
instances ($\epsilon=0.05$). The full Hessian is formed by forward-over-reverse
automatic differentiation; warm starts begin at the Lie algebra method
estimate and include its cost.}
\label{tab:adnewton}
\end{table}

\section{Discussion}
We have shown that the same four-step algorithm aligns the vector representations of every classical matrix Lie group (Figs.~\ref{fig:acc}-\ref{fig:pg-last}), with the only group-dependent step (the projection onto the Lie algebra $\mathfrak g$) written in closed form for each family of groups (Table~\ref{tab:projections}). Three types of Lie groups require additional structure: the indefinite orthogonal and symplectic projections compose the antisymmetric or symmetric projection with the corresponding isometry ($\eta$ or $J$); the spin groups are invariant under the Clifford algebra, so we use a gamma-matrix representation \cite{Peskin:1995ev} and project onto the bivector generators; and the special Euclidean group $SE(n)$ is most easily represented in homogeneous coordinates. These constructions resolve the three obstructions noted in the introduction: no step of the algorithm requires compactness, indefinite metrics enter only through the isometry-composed projections, and complex groups are handled by the real Frobenius inner product $\mathrm{Re}\,\mathrm{tr}(A^\dagger B)$ together with the embedding $\varphi$.

The Lie algebra method with correction for $SE(n)$ serves as a single step alternative to the centering process in the Kabsch and Horn algorithms \cite{kabsch, horn}, generating essentially the same results (Fig.~\ref{fig:se}).

As discussed in the previous work \cite{sha2026}, the Lie algebra method can only produce an element of the identity component $G^0$, since the matrix exponential cannot leave it. When the desired group element lies in another connected component, one must additionally fix a coset representative $h$ of that component (e.g.\ a spatial reflection for $O(n)$), apply the method to the pair $(X,\,h^{-1}Y)$, and recover $g=hg^0$ with $g^0\in G^0$.

We showed that the correctness of each projection reduces to a small set of exact matrix identities, and we formally verified these formulae in Lean 4.31.0 with Mathlib 4.31.0 (\S\ref{app:lean}). We formally verified that the real embedding $\varphi$ used to handle the complex groups is an injective unital homomorphism of real algebras commuting with the matrix exponential, $\exp\circ\varphi=\varphi\circ\exp$. Additionally, we showed that membership of the projection in $\mathfrak g$ and orthogonality of the residual imply uniqueness of the projection as the least squares-optimal element of the Lie algebra. We also showed that the objective function for the Lie algebra method can be understood as a weighted sum (Lemma~\ref{lem:dataweight}), which explains the deviation from optimality under noisy conditions.

Therefore, we proposed a quasi-Newton correction to refine the result of the Lie algebra method (\S\ref{sec:correction}). Each step of the correction solves a single linear least-squares problem for the algebra coordinates and applies the resulting update multiplicatively. Using the pseudoinverse for least-squares optimization has been previously described \cite{LEVIN20011961}. Because the correction does not build or factor a Hessian, it reaches the optimum at roughly eight times the Lie algebra method computation time, whereas the fastest naive implementations of Newton's method require far more time (Fig.~\ref{fig:time}, Table~\ref{tab:adnewton}). For the modest overhead of the corrections, we match the accuracy of the Lie algebra method in the noiseless case and the accuracy of brute-force least-squares optimization in the presence of noise (Fig.~\ref{fig:acc}).

\appendix

\section{Formal verification and worked examples}
\label{app:lean}
The correctness of the method reduces to a handful of exact matrix
identities, which we formally verified in the Lean~4 proof assistant
(Lean \texttt{v4.31.0}) on top of Mathlib (\texttt{v4.31.0}). The
development compiles with no \texttt{sorry} (no admitted goals) and no
\texttt{axiom}s beyond those of Mathlib. Each result below carries a
footnote naming the corresponding theorem in the Lean source file
\texttt{FastLieAlignProjections.lean}.

Throughout, $\langle A,B\rangle=\mathrm{Re}\,\mathrm{tr}(A^\dagger B)$ denotes the real Frobenius inner product and $\|\cdot\|_F$ the induced norm. The classical algebras treated are $\mathfrak{gl}(n)$, $\mathfrak{sl}(n)$, $\mathfrak{so}(n)$, $\mathfrak u(n)$, $\mathfrak o(p,q)$, and $\mathfrak{sp}(2n)$. The spin and special Euclidean cases are not formalized separately: their algebras reduce to $\mathfrak{so}(n)$ and to a block containing $\mathfrak{so}(n)$.

\subsection*{Projections}

\begin{theorem}[Membership]\label{thm:lean-mem} For each algebra $\mathfrak g$ above, the projection $P=\mathrm{proj}_{\mathfrak g}(l_0)$ of Table~\ref{tab:projections} lies in $\mathfrak g$: it satisfies the defining relation, namely $\mathrm{tr}\,P=0$ for $\mathfrak{sl}(n)$, $P^\top=-P$ for $\mathfrak{so}(n)$, $P^\dagger=-P$ for $\mathfrak u(n)$, $P^\top\eta+\eta P=0$ for $\mathfrak o(p,q)$, and $P^\top J+J P=0$ for $\mathfrak{sp}(2n)$.\footnote{Lean: \texttt{proj\_GL}, \texttt{proj\_SL\_mem}, \texttt{proj\_SO\_mem}, \texttt{proj\_U\_mem}, \texttt{proj\_Opq\_mem}, \texttt{proj\_Sp\_mem}.}
\end{theorem}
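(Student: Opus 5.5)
The plan is to verify each defining relation by direct algebraic manipulation. We use only linearity of trace and transpose (or conjugate transpose), the involution properties $(A^\top)^\top=A$ and $(A^\dagger)^\dagger=A$, and the structural identities of the metric and symplectic form: $\eta^\top=\eta$, $\eta^2=I$, $J^\top=-J$, $J^2=-I$, and $JJ^\top=J^\top J=I$. No inner-product argument is needed here, since membership is a purely pointwise statement about $P$. The case $\mathfrak{gl}(n)$ is trivial because $P=l_0$ and there is no constraint.

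For $\mathfrak{sl}(n)$ I will compute $\mathrm{tr}\,P=\mathrm{tr}\,l_0-\frac{\mathrm{tr}\,l_0}{n}\mathrm{tr}\,I=\mathrm{tr}\,l_0-\mathrm{tr}\,l_0=0$, using $\mathrm{tr}\,I=n$ and $n\ge 1$ so that division by $n$ is legitimate. For $\mathfrak{so}(n)$ and $\mathfrak u(n)$, I will apply the involution to $P=\frac12(l_0-l_0^\top)$, respectively $P=\frac12(l_0-l_0^\dagger)$, and use additivity of the involution. This gives $P^\top=\frac12(l_0^\top-l_0)=-P$, and the same with $\dagger$. For the complex case one must note that $\dagger$ is conjugate-linear, but the scalar $\frac12$ is real, so nothing changes.

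For $\mathfrak o(p,q)$, I will take $P=\frac12(l_0-\eta l_0^\top\eta)$ and compute $P^\top=\frac12(l_0^\top-\eta l_0\eta)$ using $\eta^\top=\eta$. Then
\[
P^\top\eta=\tfrac12(l_0^\top\eta-\eta l_0),\qquad \eta P=\tfrac12(\eta l_0-l_0^\top\eta),
\]
where $\eta^2=I$ collapses the products $\eta l_0\eta\eta$ and $\eta\eta l_0^\top\eta$. The two expressions cancel. For $\mathfrak{sp}(2n)$, I will take $P=\frac12(l_0+J^\top l_0^\top J^\top)$, so that $P^\top=\frac12(l_0^\top+Jl_0J)$. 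Then $P^\top J=\frac12(l_0^\top J-Jl_0)$ by $J^2=-I$, and $JP=\frac12(Jl_0+l_0^\top J^\top)=\frac12(Jl_0-l_0^\top J)$ by $JJ^\top=I$ and $J^\top=-J$. Again the sum is zero.

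The only step I expect to require care is the $\mathfrak{sp}(2n)$ case. The order of the factors $J$ versus $J^\top$, and the sign conventions, matter there: one must use $JJ^\top=I$ in one place and $J^2=-I$ in another. In a formal setting, the block identities for $J=\left[\begin{smallmatrix}0&I\\-I&0\end{smallmatrix}\right]$ must first be established as lemmas, and the remaining computation is then ring normalization. The $\mathfrak o(p,q)$ case likewise needs the lemma $\eta^2=I$ for a diagonal $\pm1$ matrix, which follows entrywise from $(\pm1)^2=1$.
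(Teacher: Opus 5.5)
Your proposal is correct and follows essentially the same route as the paper, which also verifies each defining relation by direct substitution using $\eta^\top=\eta$, $\eta^2=I$, $J^\top=-J$, and $J^\top J=JJ^\top=I$. The only cosmetic difference is that the paper first rewrites the $\mathfrak o(p,q)$ and $\mathfrak{sp}(2n)$ constraints in the equivalent forms $\eta P^\top\eta=-P$ and $J^\top P^\top J^\top=P$ and checks those, whereas you verify $P^\top\eta+\eta P=0$ and $P^\top J+JP=0$ directly; your sign bookkeeping in the symplectic case is right.
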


Membership is the algebraic half of being a projection; the other half is orthogonality.

\begin{theorem}[Orthogonality]\label{thm:lean-orth}
For each algebra $\mathfrak g$ above and every $v\in\mathfrak g$, the residual is Frobenius-orthogonal to $v$:
$\langle l_0-\mathrm{proj}_{\mathfrak g}(l_0),\,v\rangle=0$.\footnote{Lean: \texttt{proj\_SL\_orthogonal}, \texttt{proj\_SO\_orthogonal}, \texttt{proj\_U\_orthogonal}, \texttt{proj\_Opq\_orthogonal}, \texttt{proj\_Sp\_orthogonal}.}
\end{theorem}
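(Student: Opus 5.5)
We prove the statement case by case, using one mechanism throughout: each projection in Table~\ref{tab:projections} has the form $P=\tfrac12(l_0+\sigma(l_0))$ or $P=l_0-cI$, where $\sigma$ is an isometric involution of the real Frobenius inner product. The residual $l_0-P$ then lies in an eigenspace of $\sigma$ complementary to $\mathfrak g$. Orthogonality follows from the adjoint property $\langle\sigma(A),B\rangle=\langle A,\sigma(B)\rangle$ together with $\sigma(v)=-v$ or $+v$ on $\mathfrak g$. Theorem~\ref{thm:lean-mem} is not needed here; the argument uses only the defining relation satisfied by $v$.

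\textbf{Trace-type cases.} For $\mathfrak{gl}(n)$ the residual is $0$. For $\mathfrak{sl}(n)$ the residual is $\frac{\mathrm{tr}\,l_0}{n}I$, and
\[
\langle cI,v\rangle=\mathrm{Re}(\bar c\,\mathrm{tr}\,v)=0
\]
because $\mathrm{tr}\,v=0$.

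\textbf{Transpose-type cases.} For $\mathfrak{so}(n)$ the residual is $S=\frac12(l_0+l_0^T)$, which is symmetric, and $v^T=-v$. By cyclicity and transposition invariance of the trace,
\[
\mathrm{tr}(S^Tv)=\mathrm{tr}(v^TS)=-\mathrm{tr}(vS)=-\mathrm{tr}(Sv)=-\mathrm{tr}(S^Tv),
\]
so the inner product vanishes. For $\mathfrak u(n)$ the same computation goes through with $\dagger$ in place of $T$. The residual $H$ is Hermitian and $v^\dagger=-v$, so $\mathrm{tr}(Hv)$ equals minus its own complex conjugate. It is therefore purely imaginary, and its real part is $0$.

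\textbf{Isometry-composed cases.} For $\mathfrak o(p,q)$ the residual is $\frac12(l_0+\eta l_0^T\eta)$. We use $\eta^T=\eta$ and $\eta^2=I$, together with the defining relation $v^T\eta+\eta v=0$, which is equivalent to $\eta v^T\eta=-v$. Cycling the trace gives
\[
\mathrm{tr}\big((\eta l_0^T\eta)^Tv\big)=\mathrm{tr}(\eta l_0\eta v)=\mathrm{tr}(l_0\,\eta v\eta)=\mathrm{tr}(l_0\,(-v^T))=-\mathrm{tr}(l_0^Tv).
\]
Hence the two halves of the residual contribute opposite amounts, and the total inner product is $0$.

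The symplectic case is identical, with $J$ in place of $\eta$. The relevant identities are $J^T=-J=J^{-1}$ and the defining relation $J v J=v^T$, which is obtained from $v^TJ+Jv=0$ by multiplying on the right by $J^{-1}=-J$. The residual is $\frac12(l_0-J^Tl_0^TJ^T)$, and the same cycling shows that $\langle J^Tl_0^TJ^T,v\rangle=\langle l_0,v\rangle$, which makes the residual orthogonal to $v$.

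The main obstacle is bookkeeping rather than ideas. The signs in the $\mathfrak{sp}$ case (from $J^T=-J$) and the need to take real parts for complex entries in every case are where errors creep in. We will therefore verify the identity $\langle\sigma(A),B\rangle=\langle A,\sigma(B)\rangle$ once, in general, for every map of the form $\sigma(A)=\pm M A^\dagger M^{-1}$ with $M$ unitary. Each case then reduces to checking which sign $\sigma$ takes on $\mathfrak g$.
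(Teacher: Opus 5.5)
Your proposal is correct and follows essentially the paper's own route. As in the appendix's worked derivations, you place the residual in the eigenspace of the relevant involution complementary to $\mathfrak g$, then kill the inner product using the defining relation of $v$ together with cyclicity and transpose-invariance of the trace; your sign checks for $\mathfrak o(p,q)$ and $\mathfrak{sp}(2n)$ are right.

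One caution concerns the general lemma you promise at the end. The identity $\langle\sigma(A),B\rangle=\langle A,\sigma(B)\rangle$ for $\sigma(A)=\pm MA^\dagger M^{-1}$ holds only when $M^2$ is a scalar multiple of $I$; it fails, for example, for a planar rotation by $\pi/4$. Both $\eta$ and $J$ satisfy that condition. Also, for $\mathfrak{sp}(2n,\mathbb C)$ the relevant map uses $A^T$ rather than $A^\dagger$. Since your explicit case computations never invoke that lemma, neither point affects the proof.
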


The two together give the Pythagorean identity that the optimality proof uses.

\begin{proposition}[Frobenius Pythagoras]\label{prop:lean-pyth}
If $\langle P,D\rangle=0$ then $\|P+D\|_F^2=\|P\|_F^2+\|D\|_F^2$.\footnote{Lean: \texttt{frobenius\_inner\_add\_sq\_eq\_add\_sq\_of\_orthogonal} (real case), \texttt{frobenius\_re\_add\_sq} (complex case).}
\end{proposition}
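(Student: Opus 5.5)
The plan is to expand the squared norm bilinearly and show that the cross term is exactly twice the given inner product. Since $\langle A,B\rangle=\mathrm{Re}\,\mathrm{tr}(A^\dagger B)$ is real-bilinear (conjugate-linearity in the first slot is harmless after taking real parts, because scalars are real here), write
\[
\|P+D\|_F^2=\langle P+D,P+D\rangle=\langle P,P\rangle+\langle P,D\rangle+\langle D,P\rangle+\langle D,D\rangle .
\]

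The key step is the symmetry $\langle D,P\rangle=\langle P,D\rangle$. I would prove it via $\mathrm{tr}(D^\dagger P)=\overline{\mathrm{tr}((D^\dagger P)^\dagger)}=\overline{\mathrm{tr}(P^\dagger D)}$, and then take real parts, which kills the conjugation. Alternatively, one can write entrywise $\mathrm{tr}(A^\dagger B)=\sum_{ij}\overline{A_{ij}}B_{ij}$, whose real part is symmetric in $A$ and $B$. With symmetry in hand, the cross terms equal $2\langle P,D\rangle=0$ by hypothesis. It remains to identify $\langle P,P\rangle$ and $\langle D,D\rangle$ with $\|P\|_F^2$ and $\|D\|_F^2$, which is the definition of the induced norm: $\mathrm{Re}\,\mathrm{tr}(A^\dagger A)=\sum|A_{ij}|^2$.

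In the real case $\dagger$ is the transpose and the real part is trivial, so the same computation goes through with $\mathrm{tr}(D^\top P)=\mathrm{tr}(P^\top D)$ by transpose invariance of the trace.

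The only real obstacle is bookkeeping in the complex case: one must make sure that additivity of $\mathrm{Re}$, $\mathrm{tr}$ and $\dagger$, and the conjugate-symmetry of the trace, are applied in the right order. The entrywise-sum formulation is the cleanest way to handle this, since it reduces everything to the scalar identity $|a+b|^2=|a|^2+|b|^2+2\,\mathrm{Re}(\bar a b)$ summed over entries.
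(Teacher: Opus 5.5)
Your proposal is correct and follows the same route as the paper: it gives no separate prose proof of this proposition and defers to the Lean lemmas, but its proofs of Lemma~\ref{lem:dataweight} and Lemma~\ref{lem:subst} perform exactly this expansion $\|A+B\|_F^2=\|A\|_F^2+2\langle A,B\rangle+\|B\|_F^2$ and let the cross term vanish. Your explicit check that $\mathrm{Re}\,\mathrm{tr}(D^\dagger P)=\mathrm{Re}\,\mathrm{tr}(P^\dagger D)$, which lets the two cross terms combine into $2\langle P,D\rangle$ in the complex case, supplies the one step the paper leaves implicit.
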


\begin{theorem}[Uniqueness of the optimum]\label{thm:lean-unique} $\mathrm{proj}_{\mathfrak g}(l_0)$ is the unique minimizer of $g\mapsto\|l_0-g\|_F$ over $\mathfrak g$: for every $g\in\mathfrak g$, $\|l_0-\mathrm{proj}_{\mathfrak g}(l_0)\|_F\le\|l_0-g\|_F$, with equality iff $g=\mathrm{proj}_{\mathfrak g}(l_0)$. For $\mathfrak u(n)$ the inner product
is the real part of the Hermitian form, the only ordered quantity admitting a minimization.\footnote{Lean: \texttt{frobenius\_proj\_unique\_min} (real) with \texttt{proj\_SL\_optimal}, \texttt{proj\_SO\_optimal}, \texttt{proj\_Opq\_optimal}, \texttt{proj\_Sp\_optimal}; \texttt{frobenius\_re\_proj\_unique\_min} with \texttt{proj\_U\_optimal} for the complex case.}
\end{theorem}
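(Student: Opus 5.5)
The plan is the standard Hilbert-space argument, using only Theorems~\ref{thm:lean-mem} and \ref{thm:lean-orth} and Proposition~\ref{prop:lean-pyth}. Fix $g\in\mathfrak g$ and write $P=\mathrm{proj}_{\mathfrak g}(l_0)$. We decompose
\[
l_0-g=(l_0-P)+(P-g).
\]
By Theorem~\ref{thm:lean-mem}, $P\in\mathfrak g$. Each $\mathfrak g$ in the list is cut out by real-linear conditions ($\mathrm{tr}=0$, $X^\top=-X$, $X^\dagger=-X$, $X^\top\eta+\eta X=0$, $X^\top J+JX=0$), so it is a real vector subspace. Hence $v:=P-g\in\mathfrak g$.

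Theorem~\ref{thm:lean-orth} then gives $\langle l_0-P,\,v\rangle=0$. Proposition~\ref{prop:lean-pyth}, applied with the orthogonal pair $(l_0-P,\,P-g)$, yields
\[
\|l_0-g\|_F^2=\|l_0-P\|_F^2+\|P-g\|_F^2\ \ge\ \|l_0-P\|_F^2 .
\]
Taking square roots, which is legitimate since norms are nonnegative, gives the inequality. For equality, suppose $\|l_0-g\|_F=\|l_0-P\|_F$. Then $\|P-g\|_F^2=0$, and positive-definiteness of the Frobenius norm forces $g=P$. Conversely, $g=P$ trivially gives equality.

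For $\mathfrak u(n)$ the same argument runs verbatim with $\langle A,B\rangle=\mathrm{Re}\,\mathrm{tr}(A^\dagger B)$. This is a genuine real inner product on complex matrices: it is symmetric because $\mathrm{Re}\,\mathrm{tr}(A^\dagger B)=\mathrm{Re}\,\overline{\mathrm{tr}(B^\dagger A)}$, and positive-definite because $\mathrm{Re}\,\mathrm{tr}(A^\dagger A)=\sum|a_{ij}|^2$. Also, $\mathfrak u(n)$ is a real (not complex) subspace, which is exactly why the real part must be used. One checks that the orthogonality statement and the Pythagorean identity (complex case of Proposition~\ref{prop:lean-pyth}) are phrased in this real product, so the expansion $\|A+B\|^2=\|A\|^2+2\langle A,B\rangle+\|B\|^2$ holds.

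The only mild obstacle is bookkeeping. One must confirm that each $\mathfrak g$ is closed under subtraction so that $P-g$ is an admissible test vector in Theorem~\ref{thm:lean-orth}. One must also confirm that in the complex case the cross term is $2\,\mathrm{Re}\,\mathrm{tr}(A^\dagger B)$ rather than a complex quantity. Both follow directly from the linearity of the defining relations and of $\mathrm{Re}\,\mathrm{tr}$.
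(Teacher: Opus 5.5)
Your proposal is correct and follows the paper's own route, which is the Substitution principle (Lemma~\ref{lem:subst}): you split $l_0-g$ into the residual plus an element of $\mathfrak g$, kill the cross term using membership (Theorem~\ref{thm:lean-mem}) and orthogonality (Theorem~\ref{thm:lean-orth}), and read off both the inequality and the equality case from $\|P-g\|_F^2=0$. Your extra checks, that each $\mathfrak g$ is a real subspace and that $\mathrm{Re}\,\mathrm{tr}(A^\dagger B)$ is a genuine real inner product, are the same facts the paper uses without spelling them out.
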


Optimality is complemented by the analogous inequality on the projection itself.

\begin{proposition}[Contraction]\label{prop:lean-contract}
$\|\mathrm{proj}_{\mathfrak g}(l_0)\|_F\le\|l_0\|_F$, with equality iff $l_0\in\mathfrak g$.\footnote{Lean:
\texttt{frobenius\_proj\_contraction}.}
\end{proposition}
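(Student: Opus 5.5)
The plan is to write $l_0 = P + D$ with $P=\mathrm{proj}_{\mathfrak g}(l_0)$ and $D = l_0 - P$, and to read off both claims from the Frobenius Pythagoras identity. By Theorem~\ref{thm:lean-mem}, $P\in\mathfrak g$. Then Theorem~\ref{thm:lean-orth}, applied with $v=P$, gives $\langle D,P\rangle=0$. The real inner product $\mathrm{Re}\,\mathrm{tr}(A^\dagger B)$ is symmetric, so $\langle P,D\rangle=0$ as well. Proposition~\ref{prop:lean-pyth} now yields
\[
\|l_0\|_F^2=\|P\|_F^2+\|l_0-P\|_F^2 .
\]
Since $\|l_0-P\|_F^2\ge 0$, this gives $\|P\|_F^2\le\|l_0\|_F^2$. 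Taking nonnegative square roots gives the inequality.

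For the equality case, equality holds iff $\|l_0-P\|_F^2=0$. Because the Frobenius norm is definite, this happens iff $l_0=P$. It remains to show that $l_0=P$ iff $l_0\in\mathfrak g$.
\begin{itemize}
\item If $l_0=P$, then $l_0\in\mathfrak g$ by Theorem~\ref{thm:lean-mem}.
\item Conversely, suppose $l_0\in\mathfrak g$. Apply Theorem~\ref{thm:lean-unique} with the candidate $g=l_0$. This gives $\|l_0-P\|_F\le\|l_0-l_0\|_F=0$, hence $l_0=P$.
\end{itemize}
Alternatively, $D=l_0-P\in\mathfrak g$ because $\mathfrak g$ is a real subspace. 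Then Theorem~\ref{thm:lean-orth} with $v=D$ gives $\|D\|_F^2=\langle D,D\rangle=0$.

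The only delicate point is the complex case $\mathfrak u(n)$. There I must check that the inner product is symmetric and positive-definite; both hold because $\mathrm{Re}\,\mathrm{tr}(A^\dagger B)=\mathrm{Re}\,\overline{\mathrm{tr}(B^\dagger A)}$ and $\mathrm{tr}(A^\dagger A)=\sum|a_{ij}|^2$. I must also check that $\mathfrak g$ is closed under real linear combinations, which holds for each of the listed algebras since their defining relations are real-linear. Beyond that, the argument is uniform across all the algebras treated.
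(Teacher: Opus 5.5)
Your proof is correct and follows essentially the paper's route. The contraction comes from the Frobenius Pythagoras identity $\|l_0\|_F^2=\|P\|_F^2+\|l_0-P\|_F^2$, which membership and orthogonality supply, with equality exactly when the residual vanishes, i.e.\ when $l_0\in\mathfrak g$; your extra checks (symmetry of $\mathrm{Re}\,\mathrm{tr}(A^\dagger B)$ and real-linearity of the algebras) are what let the argument run uniformly over the real and complex cases.
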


We also derive the exact error of each projection, which is what the analysis in \cite{sha2026} depends on.

\begin{proposition}[Projection error]\label{prop:lean-err}
The exact squared projection errors are $\|l_0-\mathrm{proj}_{\mathfrak{sl}}(l_0)\|_F^2=(\mathrm{tr}\,l_0)^2/n$;
$\tfrac14\|l_0+l_0^\top\|_F^2$ for $\mathfrak{so}(n)$ and $\tfrac14\|l_0+l_0^\dagger\|_F^2$ for $\mathfrak u(n)$;
$\tfrac14\|l_0+\eta\,l_0^\top\eta\|_F^2$ for $\mathfrak o(p,q)$; and $\tfrac14\|l_0-J^\top l_0^\top J^\top\|_F^2$ for
$\mathfrak{sp}(2n)$.\footnote{Lean: \texttt{proj\_SL\_error\_bound}, \texttt{proj\_SO\_error\_bound}, \texttt{proj\_U\_error\_bound}, \texttt{proj\_Opq\_error\_bound}, \texttt{proj\_Sp\_error\_bound}.}
\end{proposition}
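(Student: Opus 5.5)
The plan is a direct computation for each algebra, writing the residual $l_0-P$ explicitly and taking its Frobenius norm squared; no optimization argument is needed beyond the closed forms of Table~\ref{tab:projections}. One caveat up front: for $\mathfrak{sl}(n)$ over $\mathbb C$ the formula should be read with $(\mathrm{tr}\,l_0)^2$ replaced by $|\mathrm{tr}\,l_0|^2$, which is what the real inner product $\langle A,B\rangle=\mathrm{Re}\,\mathrm{tr}(A^\dagger B)$ produces.

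\emph{$\mathfrak{sl}(n)$.} Here $l_0-P=\frac{\mathrm{tr}\,l_0}{n}I$. Since $\|cI\|_F^2=|c|^2\,\mathrm{tr}(I)=n|c|^2$, the squared error is $|\mathrm{tr}\,l_0|^2/n$.

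\emph{$\mathfrak{so}(n)$ and $\mathfrak u(n)$.} The residuals are
\[
l_0-\tfrac12\big(l_0-l_0^\top\big)=\tfrac12\big(l_0+l_0^\top\big),\qquad l_0-\tfrac12\big(l_0-l_0^\dagger\big)=\tfrac12\big(l_0+l_0^\dagger\big).
\]
Since $\|\tfrac12 M\|_F^2=\tfrac14\|M\|_F^2$, this gives the stated values immediately.

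\emph{$\mathfrak o(p,q)$.} The residual is $l_0-\tfrac12\big(l_0-\eta l_0^\top\eta\big)=\tfrac12\big(l_0+\eta l_0^\top\eta\big)$, so the squared error is $\tfrac14\|l_0+\eta\,l_0^\top\eta\|_F^2$.

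\emph{$\mathfrak{sp}(2n)$.} With $P=\tfrac12\big(l_0+J^\top l_0^\top J^\top\big)$, the residual is $\tfrac12\big(l_0-J^\top l_0^\top J^\top\big)$, and the squared error is $\tfrac14\|l_0-J^\top l_0^\top J^\top\|_F^2$.

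In the formal version the only care needed is the scalar identity $\|cM\|_F^2=|c|^2\|M\|_F^2$ and $\mathrm{tr}(I)=n$; everything else is ring rewriting. Theorems~\ref{thm:lean-mem} and \ref{thm:lean-orth} are not needed for these identities.

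As a consistency check, Proposition~\ref{prop:lean-pyth} then gives $\|l_0\|_F^2=\|P\|_F^2+\text{error}$, which agrees with Proposition~\ref{prop:lean-contract}.
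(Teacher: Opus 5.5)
Your proposal is correct and matches the paper's route. The appendix's worked derivations record exactly these residuals ($cI$ with $c=\mathrm{tr}\,l_0/n$, $\tfrac12(l_0+l_0^\top)$, $\tfrac12(l_0+l_0^\dagger)$, $\tfrac12(l_0+\eta l_0^\top\eta)$, $\tfrac12(l_0-J^\top l_0^\top J^\top)$), and the error formulas follow by taking squared Frobenius norms with $\|cM\|_F^2=|c|^2\|M\|_F^2$ and $\mathrm{tr}\,I=n$, without needing membership or orthogonality.

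Your caveat is also right: under $\langle A,B\rangle=\mathrm{Re}\,\mathrm{tr}(A^\dagger B)$ the $\mathfrak{sl}$ error is $|\mathrm{tr}\,l_0|^2/n$, which coincides with the stated $(\mathrm{tr}\,l_0)^2/n$ only when $\mathrm{tr}\,l_0$ is real.
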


\subsection*{Worked derivations per group}
We verify the closed forms of Table~\ref{tab:projections} by direct substitution, reducing each to the substitution principle.

\begin{lemma}[Substitution principle]\label{lem:subst} Let $\mathfrak g$ be a real subspace of a real inner product space, $l_0$ a point, and $P\in\mathfrak g$. If the residual $R:=l_0-P$ is orthogonal to $\mathfrak g$, i.e.\ $\langle R,Y\rangle=0$ for all $Y\in\mathfrak g$, then $P$ is the unique minimizer of $\|l-l_0\|^2$ over $l\in\mathfrak g$; that is, $P=\mathrm{proj}_{\mathfrak g}(l_0)$.\footnote{Lean, specialized to the matrix Frobenius inner product: \texttt{frobenius\_proj\_unique\_min} (real case), \texttt{frobenius\_re\_proj\_unique\_min} (complex case); see Theorem~\ref{thm:lean-unique}.}
\end{lemma}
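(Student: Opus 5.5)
The plan is to expand the squared distance from an arbitrary competitor $l\in\mathfrak g$ to $l_0$ around $P$, and to use the Frobenius Pythagoras identity (Proposition~\ref{prop:lean-pyth}) to kill the cross term. For any $l\in\mathfrak g$, write
\[
l_0-l=(l_0-P)+(P-l)=R+D,\qquad D:=P-l .
\]
Since $\mathfrak g$ is a real subspace and both $P$ and $l$ lie in it, $D\in\mathfrak g$. The hypothesis that $R$ is orthogonal to $\mathfrak g$ then gives $\langle R,D\rangle=0$. Here the relevant inner product is real; in the complex matrix setting it is $\mathrm{Re}\,\mathrm{tr}(A^\dagger B)$, which is symmetric and real-bilinear, so only real-linear closure of $\mathfrak g$ is needed. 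This is exactly why the statement asks only for a real subspace, and why $\mathfrak u(n)$, which is not a complex subspace, is covered.

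Applying Proposition~\ref{prop:lean-pyth} (or simply expanding $\|R+D\|^2=\|R\|^2+2\langle R,D\rangle+\|D\|^2$) yields
\[
\|l-l_0\|^2=\|R\|^2+\|P-l\|^2\ \ge\ \|R\|^2=\|P-l_0\|^2 ,
\]
so $P$ is a minimizer. For uniqueness, equality holds iff $\|P-l\|^2=0$, and positive definiteness of the norm then forces $l=P$. The identification $P=\mathrm{proj}_{\mathfrak g}(l_0)$ follows because the orthogonal projection is by definition this unique minimizer. Equivalently, any two points of $\mathfrak g$ with residuals orthogonal to $\mathfrak g$ have difference lying in $\mathfrak g\cap\mathfrak g^\perp=\{0\}$.

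There is no real obstacle here. The only point requiring care is confirming that $D=P-l$ lies in $\mathfrak g$, which relies on $\mathfrak g$ being closed under real subtraction, together with the symmetry of the inner product used in the expansion. In the complex case one must work with $\mathrm{Re}\,\mathrm{tr}$ rather than the Hermitian form itself, so that $\langle R,D\rangle+\langle D,R\rangle=2\langle R,D\rangle$ holds. Existence of the minimizer is not an issue either, since $P$ is given.
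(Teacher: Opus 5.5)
Your proof is correct and matches the paper's argument: you split $l-l_0$ into the $\mathfrak g$-component $P-l$ and the residual $R$, kill the cross term by orthogonality, and read off $\|l-l_0\|^2=\|R\|^2+\|P-l\|^2$, with equality only at $l=P$. Your extra remarks (real-linear closure of $\mathfrak g$ covering $\mathfrak u(n)$, and $\mathrm{Re}\,\mathrm{tr}$ giving a symmetric inner product) are accurate and consistent with the paper's setting.
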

\begin{proof}
Write $\|l-l_0\|^2=\langle l-l_0,l-l_0\rangle$. Since $l\in\mathfrak g$ and $P\in\mathfrak g$, $l-P\in\mathfrak g$, and by hypothesis $\langle l-P,R\rangle=0$. Decompose $l-l_0=(l-P)+(P-l_0)=(l-P)-R$:
\begin{align*}
\|l-l_0\|^2
&=\langle(l-P)-R,(l-P)-R\rangle \\
&=\langle l-P,l-P\rangle-2\langle l-P,R\rangle+\langle R,R\rangle \\
&=\|l-P\|^2+\|R\|^2,
\end{align*}
where the cross term vanishes by $\langle l-P,R\rangle=0$. Both $\|l-P\|^2$ and $\|R\|^2$ are nonnegative, so $\|l-l_0\|^2\ge\|R\|^2$. Equality forces $\|l-P\|^2=0$, i.e.\ $l=P$.
\end{proof}

Thus for each group, it suffices to verify $P\in\mathfrak g$ and $\langle l_0-P,Y\rangle=0$ for every $Y\in\mathfrak g$. For the real groups we first take the real part: writing $l_0=A+iB$ with $A,B$ real, $iB$ is orthogonal to every real $Y$, so it suffices to work within $M_n(\mathbb R)$.

\subsubsection*{General linear $GL(n,\mathbb F)$}\footnote{Lean: \texttt{proj\_GL}.} $\mathfrak{gl}(n,\mathbb F)$ is all of matrix space, so $\mathrm{proj}_{\mathfrak g}(l_0)=l_0$ (real part when $\mathbb F=\mathbb R$).
\emph{Example ($GL(3,\mathbb R)$).} Nothing is removed: $\mathrm{proj}_{\mathfrak{gl}}(M)=M$.

\subsubsection*{Special linear $SL(n,\mathbb F)$}\footnote{Lean: \texttt{proj\_SL}, \texttt{proj\_SL\_mem}, \texttt{proj\_SL\_orthogonal}, \texttt{proj\_SL\_optimal}, \texttt{proj\_SL\_error\_bound}.} For $P=l_0-\tfrac{\mathrm{tr}\,l_0}{n}I$ we have $\mathrm{tr}\,P=0$, so $P\in\mathfrak{sl}$. The residual $R=cI$ with $c=\tfrac{\mathrm{tr}\,l_0}{n}$ satisfies $\langle cI,Y\rangle=\mathrm{Re}(\bar c\,\mathrm{tr}\,Y)=0$ for every traceless $Y$.

\subsubsection*{Orthogonal $O(n)/SO(n)$}\footnote{Lean: \texttt{proj\_SO}, \texttt{proj\_SO\_mem}, \texttt{proj\_SO\_orthogonal}, \texttt{proj\_SO\_optimal}, \texttt{proj\_SO\_error\_bound}.}
For $P=\tfrac12(l_0-l_0^{T})$, $P^{T}=-P$, so $P\in\mathfrak{so}(n)$. The residual $R=\tfrac12(l_0+l_0^{T})$ is symmetric, and for antisymmetric $Y$ one has $\langle R,Y\rangle=\mathrm{tr}(RY)=-\mathrm{tr}(RY)$, hence
$\langle R,Y\rangle=0$.

\subsubsection*{Unitary $U(n)$}\footnote{Lean: \texttt{proj\_U}, \texttt{proj\_U\_mem}, \texttt{proj\_U\_orthogonal} (real-part inner product), \texttt{proj\_U\_optimal}, \texttt{proj\_U\_error\_bound};
also \texttt{trace\_Hermitian\_smul\_skewHermelian\_im}.} For $P=\tfrac12(l_0-l_0^{\dagger})$, $P^{\dagger}=-P$, so $P\in\mathfrak u(n)$. The residual $R=\tfrac12(l_0+l_0^{\dagger})$ is Hermitian; for skew-Hermitian $Y$, $\mathrm{tr}(RY)$ is purely imaginary, so $\langle R,Y\rangle=\mathrm{Re}\,\mathrm{tr}(RY)=0$.

\subsubsection*{Special unitary $SU(n)$.} Let $S=\tfrac12(l_0-l_0^{\dagger})$
and $H=\tfrac12(l_0+l_0^{\dagger})$. $S^{\dagger}=-S$, so $\mathrm{tr}\,S$
is purely imaginary; $\tfrac1n\mathrm{tr}(S)I$ is skew-Hermitian, hence
$P=S-\tfrac1n\mathrm{tr}(S)I$ is skew-Hermitian with $\mathrm{tr}\,P=0$,
i.e.\ $P\in\mathfrak{su}(n)$. For $Y\in\mathfrak{su}(n)$,
$\langle H,Y\rangle=0$ as in the $U(n)$ case and
$\langle\tfrac1n\mathrm{tr}(S)I,Y\rangle=0$ since $\mathrm{tr}\,Y=0$.

\subsubsection*{Indefinite orthogonal $O(p,q)/SO(p,q)$}\footnote{Lean:
\texttt{proj\_Opq}, \texttt{proj\_Opq\_mem}, \texttt{proj\_Opq\_orthogonal},
\texttt{proj\_Opq\_optimal}, \texttt{proj\_Opq\_error\_bound}.}
With $\eta^{T}=\eta$, $\eta^{2}=I$, the constraint ``$\eta X$
antisymmetric'' reads $\eta X^{T}\eta=-X$. For
$P=\tfrac12(l_0-\eta l_0^{T}\eta)$, $\eta P^{T}\eta=-P$, so
$P\in\mathfrak{so}(p,q)$. The residual $R=\tfrac12(l_0+\eta l_0^{T}\eta)$
satisfies $\eta R^{T}\eta=R$. Substituting $Y=-\eta Y^{T}\eta$ for
$Y\in\mathfrak{so}(p,q)$ results in
$\mathrm{tr}(R^{T}Y)=-\mathrm{tr}(R^{T}Y)$, hence $\langle R,Y\rangle=0$.

\subsubsection*{Symplectic $Sp(2n,\mathbb F)$}\footnote{Lean: \texttt{proj\_Sp},
\texttt{proj\_Sp\_mem}, \texttt{proj\_Sp\_orthogonal},
\texttt{proj\_Sp\_optimal}, \texttt{proj\_Sp\_error\_bound}.}
With $J^{T}=-J$, $J^{T}J=JJ^{T}=I$, the constraint ``$JX$ symmetric''
reads $J^{T}X^{T}J^{T}=X$. For $P=\tfrac12(l_0+J^{T}l_0^{T}J^{T})$,
$J^{T}P^{T}J^{T}=P$, so $P\in\mathfrak{sp}(2n)$. The residual
$R=\tfrac12(l_0-J^{T}l_0^{T}J^{T})$ satisfies $J^{T}R^{T}J^{T}=-R$;
substituting $Y=J^{T}Y^{T}J^{T}$ for $Y\in\mathfrak{sp}(2n)$ results in
$\mathrm{tr}(R^{\dagger}Y)=-\mathrm{tr}(R^{\dagger}Y)$, hence
$\langle R,Y\rangle=0$, covering both base fields.

\subsubsection*{Spin $\mathrm{Spin}(n)$.} The bivectors $S_{ij}=\tfrac12\gamma_i\gamma_j$
($i<j$) span $\mathfrak{spin}(n)$, where $\gamma_i$ are gamma matrices
\cite{Peskin:1995ev}. With Hermitian gammas
($\gamma_i^\dagger=\gamma_i$, $\gamma_i\gamma_j=-\gamma_j\gamma_i$ for
$i\ne j$, $\gamma_i^2=I$), $S_{ij}^\dagger=-S_{ij}$ and by the trace
identity $\langle S_{ij},S_{kl}\rangle=\tfrac{d}{4}\delta_{ik}\delta_{jl}$
with $d=2^{\lfloor n/2\rfloor}$. Thus $\{B_{ij}=\tfrac{2}{\sqrt d}S_{ij}\}$
is an orthonormal basis of $\mathfrak{spin}(n)$ and
$\mathrm{proj}_{\mathfrak{spin}(n)}(l_0)=\sum_k\langle B_k,l_0\rangle\,B_k$.

\subsubsection*{Special Euclidean $SE(n)$.} The inhomogeneous group
$SO(n)\ltimes\mathbb R^n$ acts on homogeneous coordinates as the
$(n{+}1)$-square matrices
$\left[\begin{smallmatrix}R&t\\0&1\end{smallmatrix}\right]$. Its algebra
consists of blocks $\left[\begin{smallmatrix}a&v\\0&0\end{smallmatrix}\right]$
with $a\in\mathfrak{so}(n)$ and $v\in\mathbb R^n$; the blocks are mutually
orthogonal, so the projection acts blockwise: antisymmetrize the
upper-left block, retain the translation column $v$, and zero the bottom
row.

\section{The real embedding commutes with the exponential}
\label{app:phi}

\begin{lemma}\label{lem:phihom}
The map $\varphi:M_n(\mathbb C)\to M_{2n}(\mathbb R)$,
$\varphi(M)=\left[\begin{smallmatrix}\mathrm{Re}\,M&-\mathrm{Im}\,M\\
\mathrm{Im}\,M&\mathrm{Re}\,M\end{smallmatrix}\right]$, is an injective
unital homomorphism of real algebras: $\varphi(M+N)=\varphi(M)+\varphi(N)$,
$\varphi(rM)=r\varphi(M)$ for $r\in\mathbb R$, $\varphi(I_n)=I_{2n}$, and
$\varphi(MN)=\varphi(M)\varphi(N)$.\footnote{Lean: \texttt{realEmbedding\_is\_homomorphism}, bundling \texttt{realEmbedding\_injective}, \texttt{realEmbedding\_add}, \texttt{realEmbedding\_smul}, \texttt{realEmbedding\_one}, and \texttt{realEmbedding\_mul}. The embedding is realized over the index type $n\oplus n$ via \texttt{Matrix.fromBlocks}.}
\end{lemma}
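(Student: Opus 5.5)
The plan is to write $M=A+iB$ with $A=\mathrm{Re}\,M$ and $B=\mathrm{Im}\,M$ real $n\times n$ matrices, so that $\varphi(M)=\left[\begin{smallmatrix}A&-B\\ B&A\end{smallmatrix}\right]$, and to check each property blockwise.

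\emph{Additivity and real homogeneity.} These follow at once from the $\mathbb R$-linearity of $\mathrm{Re}$ and $\mathrm{Im}$ applied entrywise. For $r\in\mathbb R$ we have $\mathrm{Re}(rM)=rA$ and $\mathrm{Im}(rM)=rB$. Note that homogeneity fails for non-real scalars, which is why the statement is only for real algebras.

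\emph{Unitality.} Since $\mathrm{Re}\,I_n=I_n$ and $\mathrm{Im}\,I_n=0$, we get $\varphi(I_n)=\mathrm{diag}(I_n,I_n)=I_{2n}$.

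\emph{Injectivity.} The map is linear, so it suffices to show its kernel is trivial. If $\varphi(M)=0$, the upper-left block gives $A=0$ and the lower-left block gives $B=0$, hence $M=0$. Alternatively, $M$ can be read off directly as the upper-left block plus $i$ times the lower-left block.

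\emph{Multiplicativity.} This is the only step with content. Take $N=C+iD$ with $C,D$ real. Expanding $MN=(AC-BD)+i(AD+BC)$, we get $\varphi(MN)=\left[\begin{smallmatrix}AC-BD&-(AD+BC)\\ AD+BC&AC-BD\end{smallmatrix}\right]$. On the other hand, block multiplication gives
\[
\begin{bmatrix}A&-B\\ B&A\end{bmatrix}\begin{bmatrix}C&-D\\ D&C\end{bmatrix}
=\begin{bmatrix}AC-BD&-AD-BC\\ BC+AD&-BD+AC\end{bmatrix}.
\]
These agree entrywise, using only associativity and distributivity of real matrix products. No commutativity is needed, since every product keeps the order (factor from $M$) times (factor from $N$).

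The main obstacle is not mathematical but bookkeeping. We must check that the real and imaginary parts of a matrix product really split as above entrywise, i.e.\ $\mathrm{Re}\sum_k M_{ik}N_{kj}=\sum_k(A_{ik}C_{kj}-B_{ik}D_{kj})$, and similarly for the imaginary part. In the formal setting one also has to match the block structure of \texttt{Matrix.fromBlocks} with its multiplication lemma. We would state and use that entrywise identity first, then apply \texttt{fromBlocks\_multiply} and compare the four blocks.
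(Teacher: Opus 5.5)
Your proposal is correct and follows essentially the same route as the paper: the additivity, real homogeneity, unitality and injectivity properties are read off directly from the blocks, and multiplicativity comes from expanding $MN=(AC-BD)+i(AD+BC)$ and matching it against the block product $\left[\begin{smallmatrix}A&-B\\ B&A\end{smallmatrix}\right]\left[\begin{smallmatrix}C&-D\\ D&C\end{smallmatrix}\right]$. Your extra remarks, that no commutativity is needed because the factor order is preserved and that the real and imaginary parts split entrywise, are accurate; they make explicit details the paper leaves implicit.
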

\begin{proof}
Additivity, real homogeneity, injectivity, and $\varphi(I_n)=I_{2n}$ are
immediate. For multiplicativity write $M=A+iB$, $N=C+iD$ with $A,B,C,D$
real; then $MN=(AC-BD)+i(AD+BC)$, so
\[
\varphi(MN)=\begin{bmatrix}AC-BD&-(AD+BC)\\ AD+BC&AC-BD\end{bmatrix}
=\begin{bmatrix}A&-B\\B&A\end{bmatrix}\begin{bmatrix}C&-D\\D&C\end{bmatrix}
=\varphi(M)\varphi(N).
\]
Equivalently, under the identification
$\mathbb C^{n}\cong\mathbb R^{2n}$, $x+iy\mapsto(x;y)$, the operator
$z\mapsto Mz$ is represented by the real matrix $\varphi(M)$, and
$\varphi$ is the resulting functor on operators.
\end{proof}

\begin{proposition}
$\exp(\varphi(M))=\varphi(\exp M)$ for every $M\in M_n(\mathbb C)$.\footnote{Lean: \texttt{exp\_commutes\_realEmbedding}, obtained from Lemma~\ref{lem:phihom} (the ring-homomorphism bundle \texttt{realEmbeddingRingHom}) together with continuity of $\varphi$ (\texttt{realEmbedding\_continuous}, from \texttt{LinearMap.continuous\_of\_finiteDimensional}) via Mathlib's \texttt{NormedSpace.map\_exp}.}
\end{proposition}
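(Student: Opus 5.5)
The plan is to prove $\exp(\varphi(M))=\varphi(\exp M)$ by pushing $\varphi$ through the power series that defines the matrix exponential. Lemma~\ref{lem:phihom} supplies everything algebraic. First, by induction on $k$, I will show $\varphi(M^k)=\varphi(M)^k$ for all $k\ge 0$. The base case is $\varphi(I_n)=I_{2n}$, and the inductive step uses $\varphi(M^{k+1})=\varphi(M^k M)=\varphi(M^k)\varphi(M)$. Real homogeneity with $r=1/k!$ then gives
\[
\varphi\Big(\sum_{k=0}^{N}\tfrac{1}{k!}M^k\Big)=\sum_{k=0}^{N}\tfrac{1}{k!}\varphi(M)^k
\]
for every partial sum, using additivity.

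Second, I pass to the limit $N\to\infty$. The map $\varphi$ is $\mathbb R$-linear between finite-dimensional real normed spaces, so it is continuous; concretely, $\|\varphi(M)\|_F^2=2\|M\|_F^2$, so $\varphi$ is even Lipschitz. The partial sums $\sum_{k\le N} M^k/k!$ converge to $\exp M$ in $M_n(\mathbb C)$. Continuity then sends their images to $\varphi(\exp M)$. By the identity above, those images are exactly the partial sums of the series for $\exp(\varphi(M))$, which converge to $\exp(\varphi(M))$. Uniqueness of limits in $M_{2n}(\mathbb R)$ finishes the proof.

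There is one subtlety, which I expect to be the only real obstacle. The exponential on $M_n(\mathbb C)$ is the complex series, but $\varphi$ is only a real-algebra map. This causes no problem, because the coefficients $1/k!$ are real, so only real homogeneity is ever used. In a formal setting, I would package $\varphi$ as a continuous ring homomorphism (equivalently a continuous $\mathbb R$-algebra homomorphism) and invoke the general fact that such maps commute with $\exp$, namely Mathlib's \texttt{NormedSpace.map\_exp}. Its hypotheses are exactly the homomorphism bundle from Lemma~\ref{lem:phihom} plus continuity, with continuity discharged by the finite-dimensionality argument.
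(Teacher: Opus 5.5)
Your proposal is correct and follows the paper's proof essentially step for step: both use Lemma~\ref{lem:phihom} to get $\varphi(M^k)=\varphi(M)^k$, then move $\varphi$ through the series limit using continuity of the finite-dimensional linear map $\varphi$. Your additions, the identity $\|\varphi(M)\|_F^2=2\|M\|_F^2$ and the remark that only the real coefficients $1/k!$ are used, are harmless refinements, and your formal route via \texttt{NormedSpace.map\_exp} is exactly the one cited in the paper's footnote.
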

\begin{proof}
By Lemma~\ref{lem:phihom}, $\varphi(M)^{k}=\varphi(M^{k})$ for all
$k\ge 0$ (with $M^{0}=I_n$, $\varphi(I_n)=I_{2n}$). As a map
between finite-dimensional vector spaces, $\varphi$ is continuous, so
\[
\exp(\varphi(M))=\sum_{k=0}^{\infty}\frac{\varphi(M)^{k}}{k!}
=\sum_{k=0}^{\infty}\frac{\varphi(M^{k})}{k!}
=\varphi\!\Big(\sum_{k=0}^{\infty}\frac{M^{k}}{k!}\Big)=\varphi(\exp M),
\]
the interchange of $\varphi$ with the limit being justified by absolute
convergence of the exponential series together with the continuity of
$\varphi$.
\end{proof}

\section*{Conflict of interest statement}
The author has no current or potential conflicts of interest to disclose.

\section*{Data availability statement}
All Julia scripts and Lean 4 code to reproduce the results in this work are available at Zenodo (doi: \href{https://doi.org/10.5281/zenodo.21045016}{10.5281/zenodo.21045016}).

\section*{Declaration of AI usage}
Portions of the Julia code and Lean 4 formalization were written using the aid of AI (Anthropic's Claude Haiku 4.5, Sonnet 4.6, and/or Opus 4.8). The author attests that all generated code was carefully reviewed for correctness.

\pagebreak
\section{Figures}
\renewcommand{\thefigure}{\arabic{figure}}
\setcounter{figure}{0}

\begin{figure}[h]
\centerline{\includegraphics[width=\linewidth]{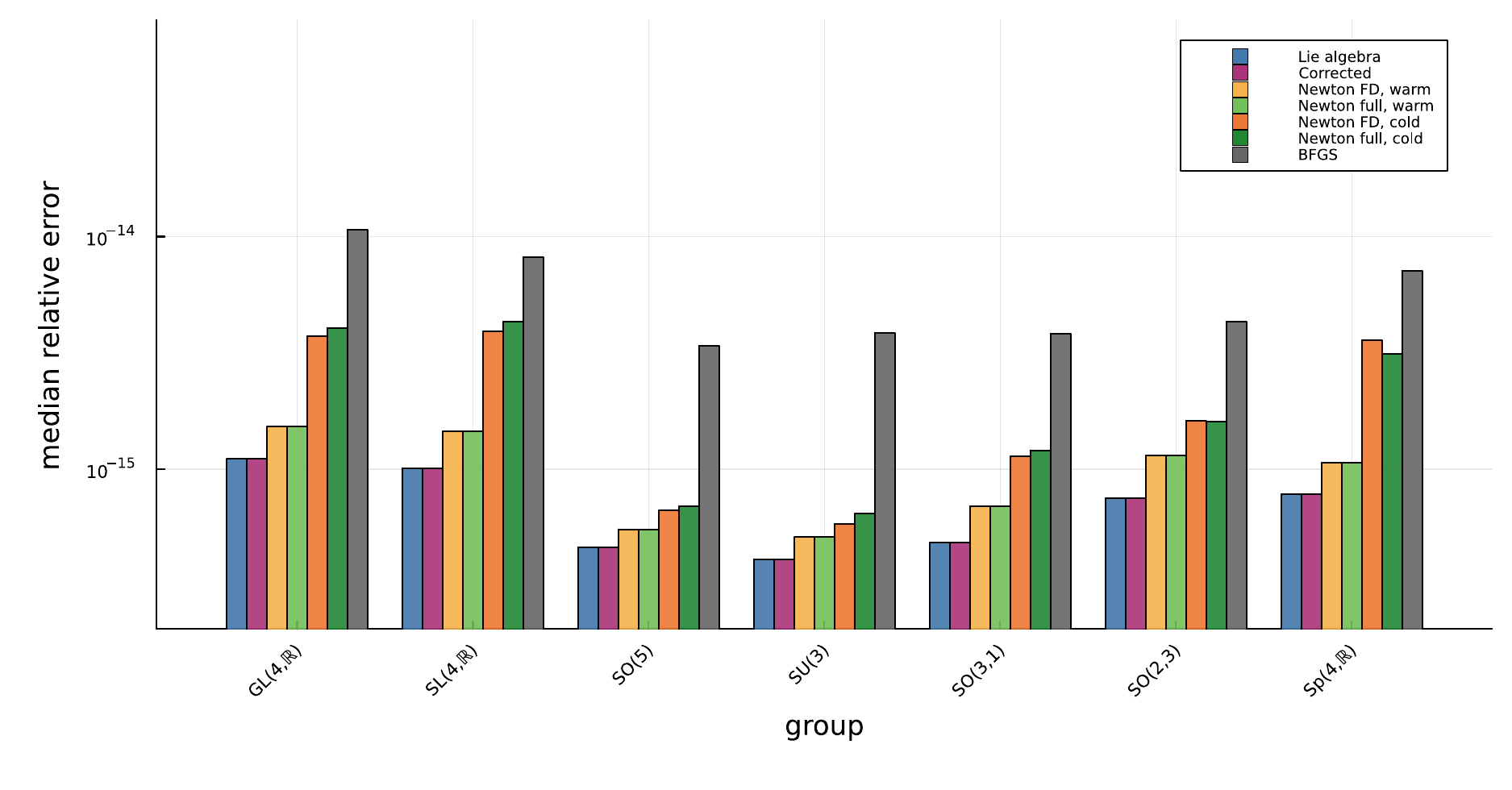}}
\caption{Median relative recovery error $\|g_{\text{est}}-g_{\text{GT}}\|/ \|g_{\text{GT}}\|$ (log scale) in the noiseless case across the classical groups, for seven methods: the Lie algebra method estimate, the correction of \S\ref{sec:correction}, automatic-differentiation Newton in four configurations (finite-difference vs.\ exact forward-over-reverse Hessian, each warm-started from the closed-form estimate or cold-started from the identity), and the direct BFGS optimizer. All seven reproduce the ground truth to near double precision, confirming the algorithm is exact (up to conditioning): the closed-form Lie and corrected estimates are limited only by conditioning, the optimizers by their stopping tolerances.}
\label{fig:acc}
\end{figure}

\begin{figure}[h]
\centerline{\includegraphics[width=\linewidth]{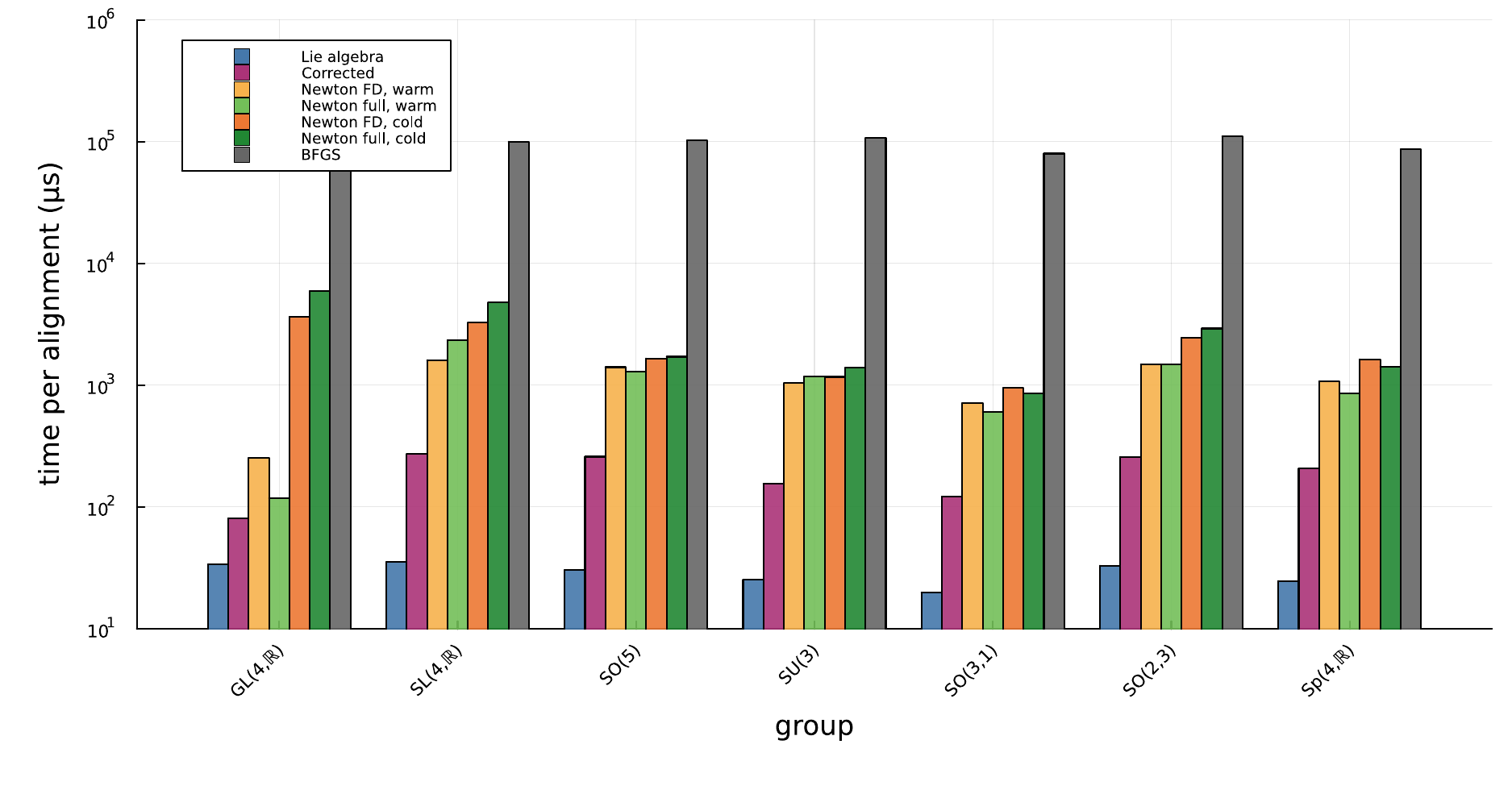}}
\caption{Median wall-clock time per alignment across the classical groups
(log scale), for the same seven methods as Fig.~\ref{fig:acc} (additive noise
$\epsilon=0.05$). The Lie algebra method and the correction of
\S\ref{sec:correction}, which use only dense linear algebra, are roughly one
to two orders of magnitude faster than the direct optimizers (BFGS and the
automatic-differentiation Newton variants), which re-exponentiate at every
step; among the Newton variants, the exact full Hessian and warm-starting
from the closed-form estimate each reduce cost, but none approach the
correction, which reaches the same optimum (cf.\
Figs.~\ref{fig:pg-first}--\ref{fig:pg-last}).}
\label{fig:time}
\end{figure}

\begin{figure}[h]
\centerline{\includegraphics[width=\linewidth]{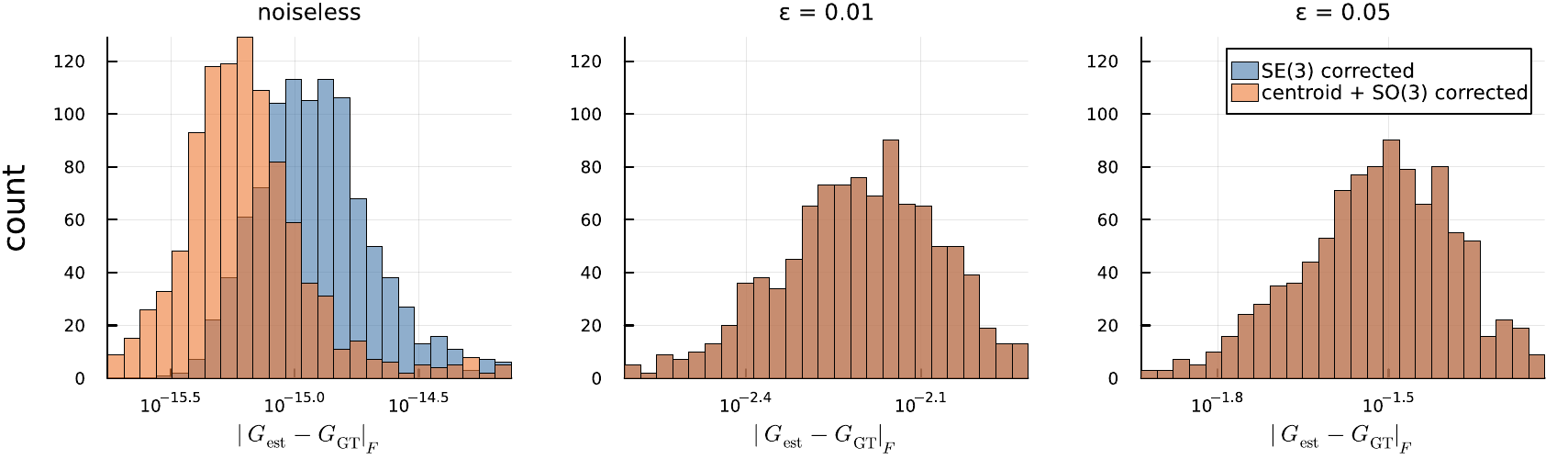}}
\caption{$SE(3)$ recovery error $\|G_{\text{est}}-G_{\text{GT}}\|_F$ for two
routes, both refined by the correction of \S\ref{sec:correction}: the
homogeneous method on the full group (blue) and the classical centroid+$SO(3)$
recipe (orange), at three noise levels (overlaid histograms, log scale). Note that the histograms overlap perfectly at $\epsilon=0.01$ and $\epsilon=0.05$. At $\epsilon=0$, both distributions are near double precision ($\sim10^{-16}$).}
\label{fig:se}
\end{figure}

\begin{figure}[h]\centering
\includegraphics[width=\linewidth]{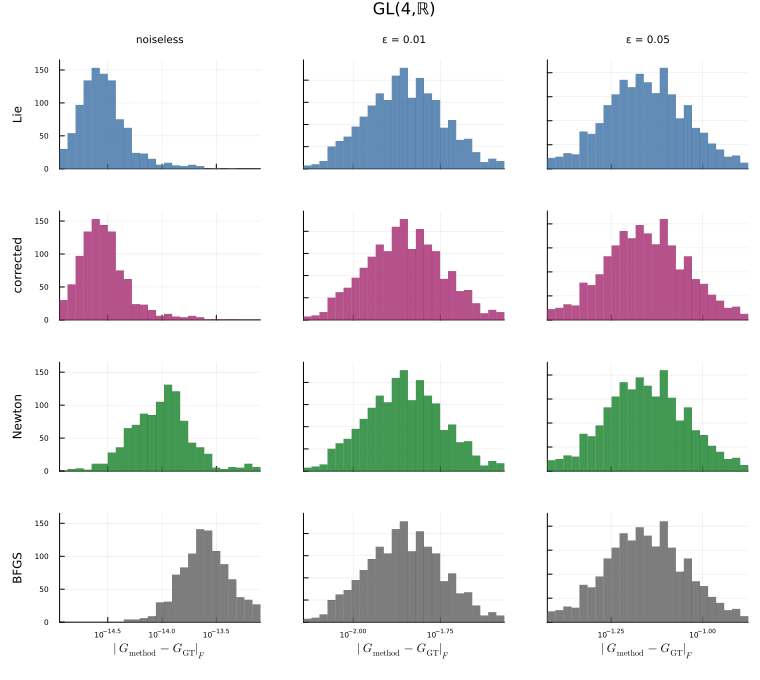}
\caption{\texttt{GL(4,R)}, 1000 trials. Per-group recovery-error histograms
(Figs.~\ref{fig:pg-first}--\ref{fig:pg-last}): for each classical
group, the $4\times3$ grid shows $\|G_{\text{method}}-G_{\text{GT}}\|_F$ over
1000 trials, with rows the four methods (Lie, corrected, Newton, BFGS) and
columns the three noise levels. Axes are shared per column (each column's
$x$-range is the 1st--99th percentile of its combined data; counts share a
$y$-range), so the rows of a column are directly comparable. In the noiseless
case the corrected and Lie algebra method histograms match; under noise the corrected
histogram matches the Newton and BFGS histograms at the least-squares optimum,
while the Lie algebra row is shifted to larger error.}
\label{fig:pg-first}
\end{figure}

\begin{figure}[h]\centering
\includegraphics[width=\linewidth]{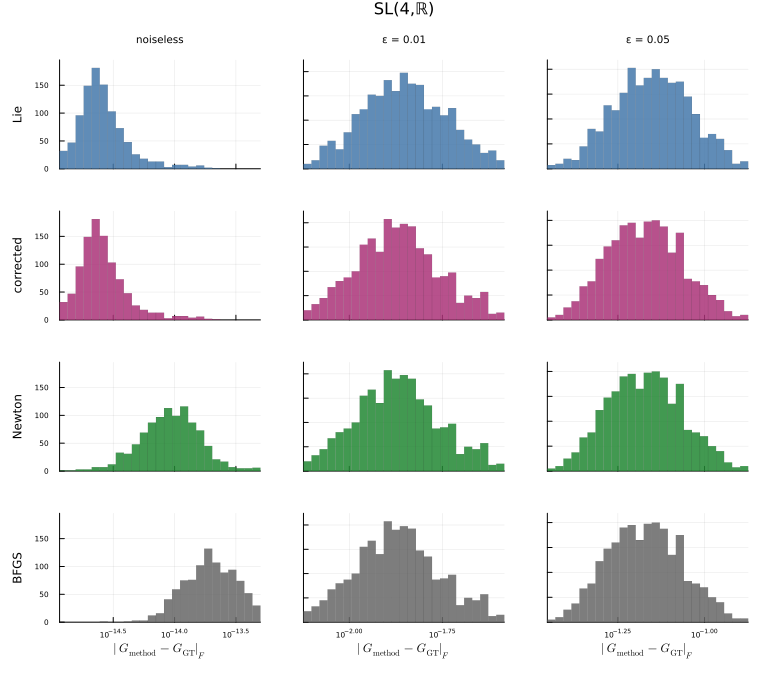}
\caption{\texttt{SL(4,R)}, 1000 trials.}
\end{figure}

\begin{figure}[h]\centering
\includegraphics[width=\linewidth]{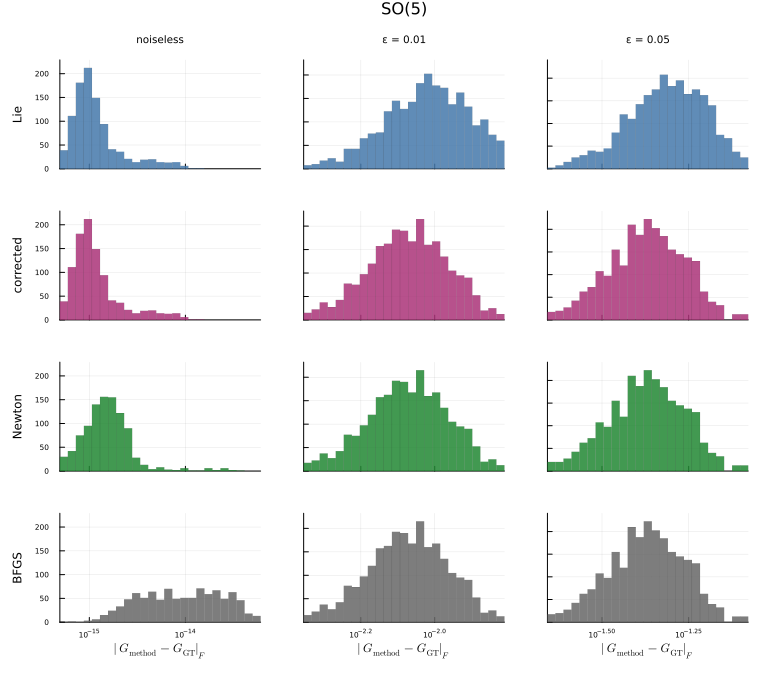}
\caption{\texttt{SO(5)}, 1000 trials.}
\end{figure}

\begin{figure}[h]\centering
\includegraphics[width=\linewidth]{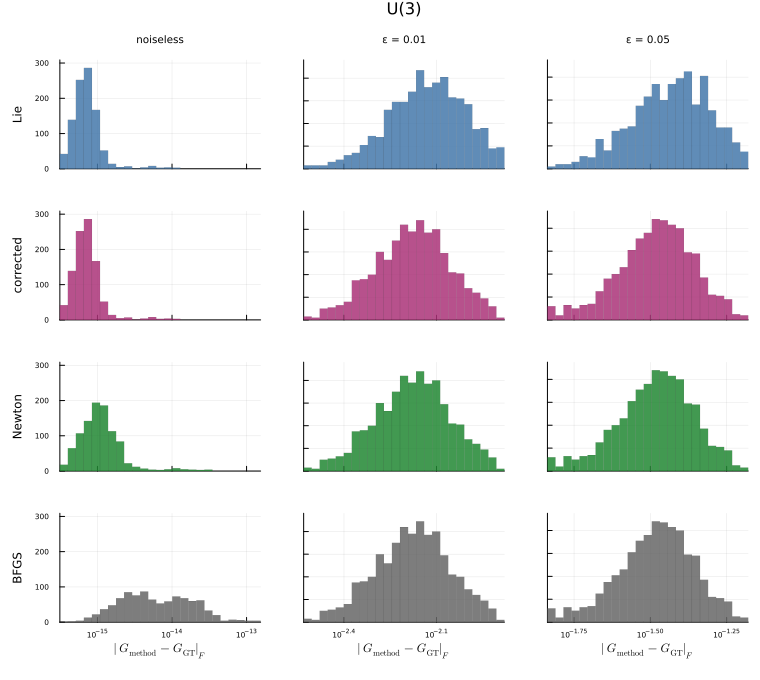}
\caption{\texttt{U(3)}, 1000 trials.}
\end{figure}

\begin{figure}[h]\centering
\includegraphics[width=\linewidth]{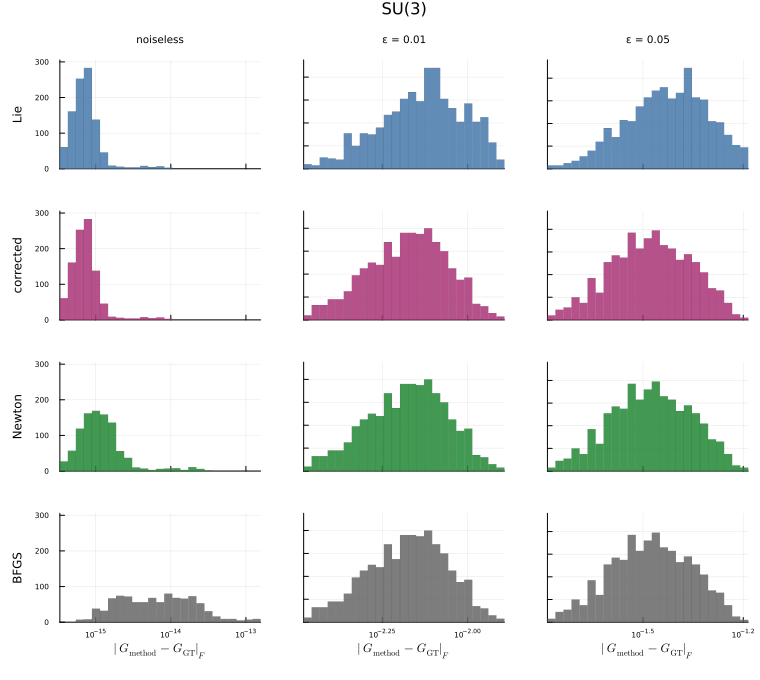}
\caption{\texttt{SU(3)}, 1000 trials.}
\end{figure}

\begin{figure}[h]\centering
\includegraphics[width=\linewidth]{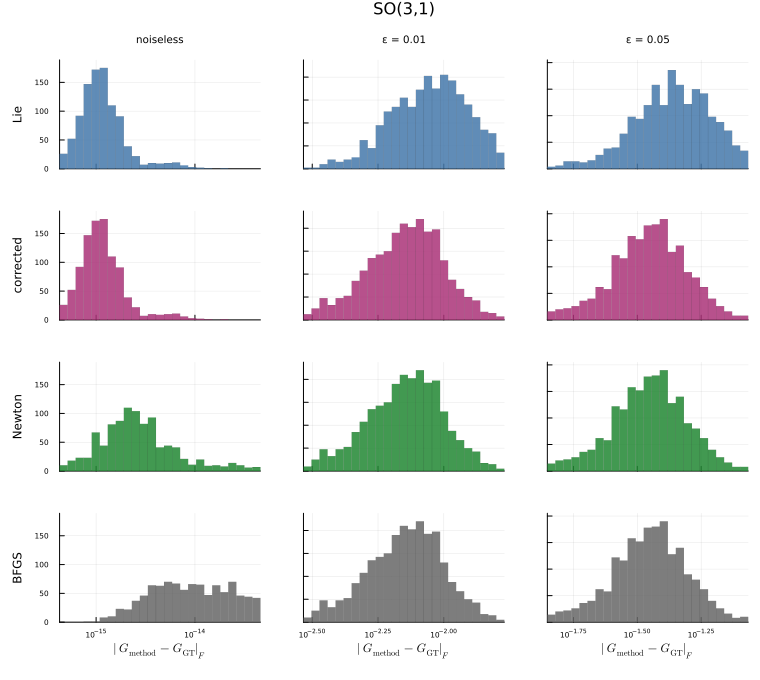}
\caption{\texttt{SO(3,1)}, 1000 trials.}
\end{figure}

\begin{figure}[h]\centering
\includegraphics[width=\linewidth]{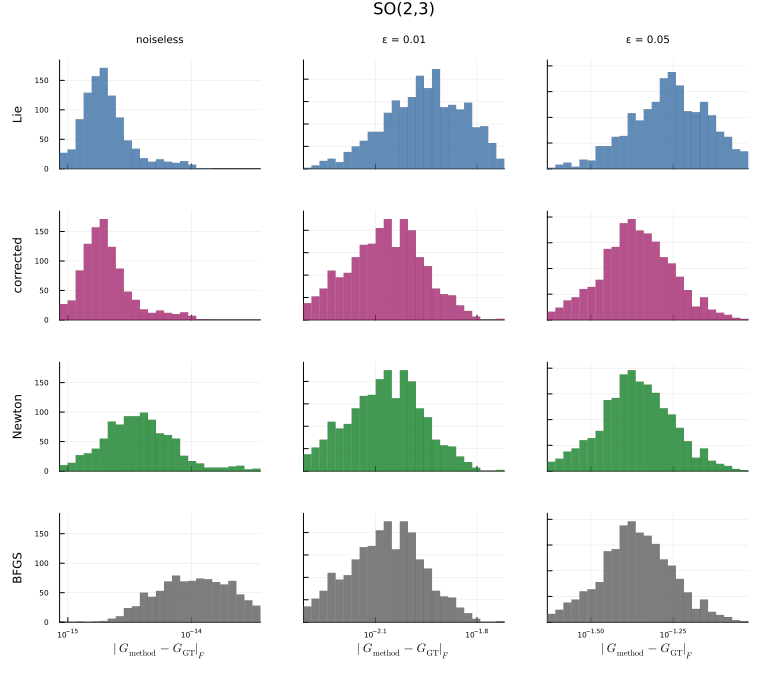}
\caption{\texttt{SO(2,3)}, 1000 trials.}
\end{figure}

\begin{figure}[h]\centering
\includegraphics[width=\linewidth]{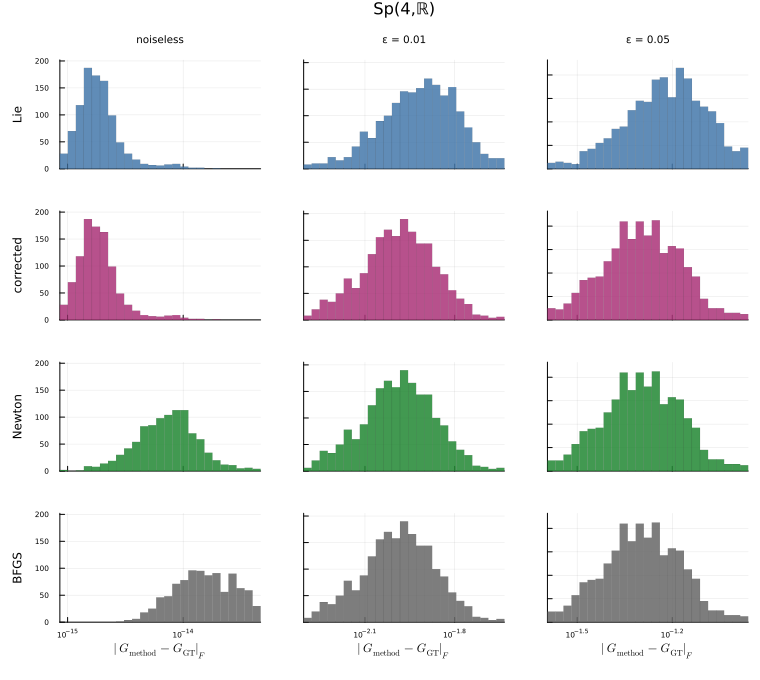}
\caption{\texttt{Sp(4,R)}, 1000 trials.}
\end{figure}

\begin{figure}[h]\centering
\includegraphics[width=\linewidth]{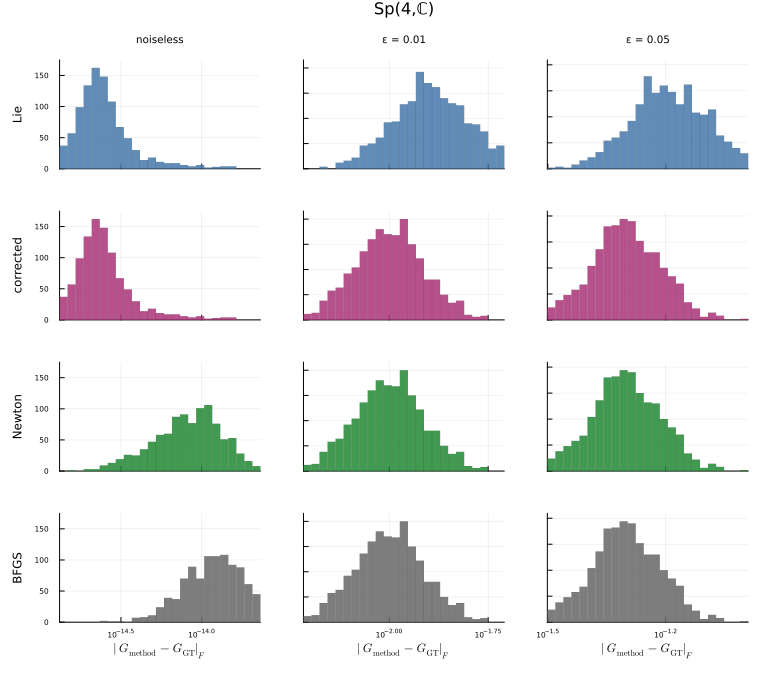}
\caption{\texttt{Sp(4,C)}, 1000 trials.}
\end{figure}

\begin{figure}[h]\centering
\includegraphics[width=\linewidth]{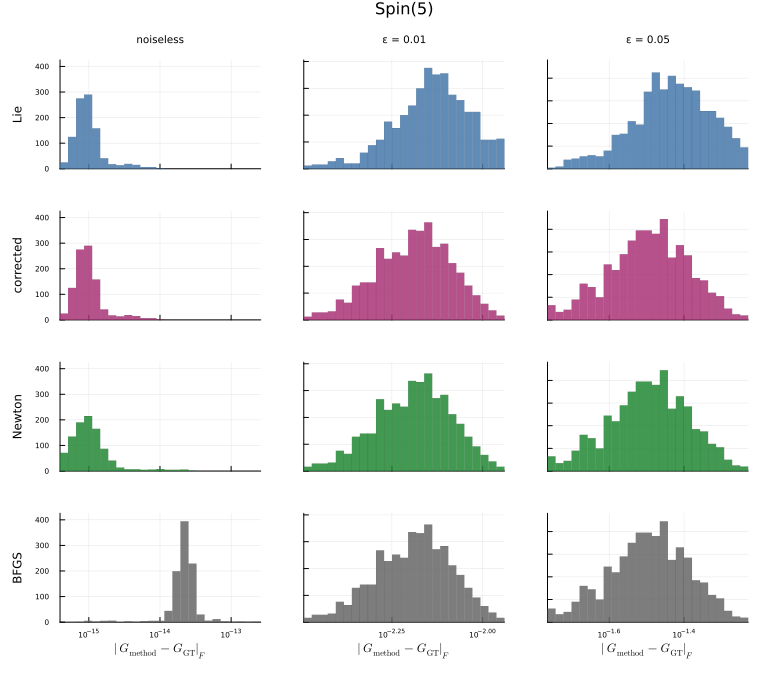}
\caption{\texttt{Spin(5)}, 1000 trials.}
\end{figure}

\begin{figure}[h]\centering
\includegraphics[width=\linewidth]{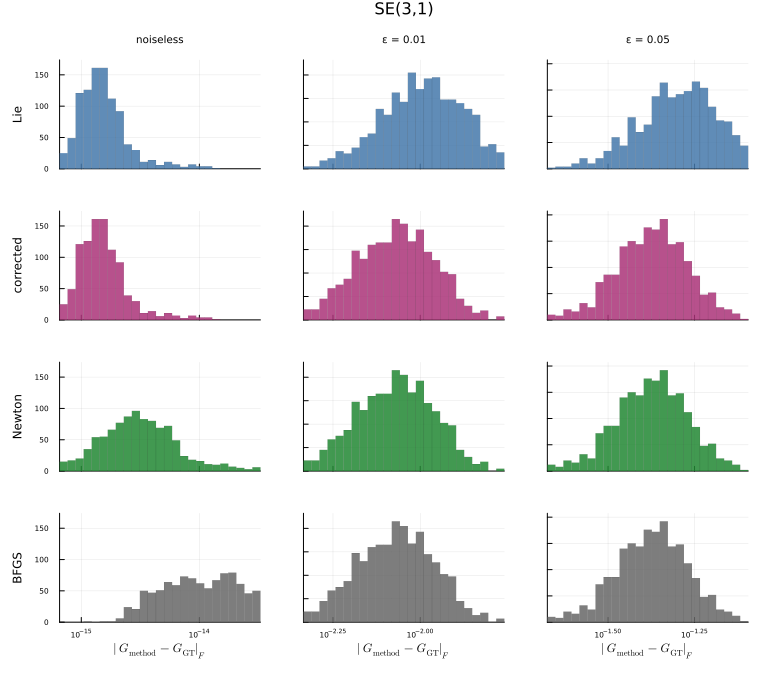}
\caption{\texttt{SE(3,1)}, 1000 trials.}
\label{fig:pg-last}
\end{figure}

\end{document}